%% file: main.tex
\documentclass[reqno]{amsart}

\input{include/preambule.tex}

\begin{document}

\title[]{On chain rule and renormalization}

\author[R.V. Dribas]{Roman V. Dribas}
\email[R. D.]{dribas.rv@phystech.su}

\author[N.A. Gusev]{Nikolay A. Gusev}
\email[N. G.]{ngusev@phystech.su, n.a.gusev@gmail.com}

\address{Moscow Institute of Physics and Technology,
  9 Institutskiy per., Dolgoprudny, Moscow Region, 141700
}

\begin{abstract}
We discuss the relationship between the chain rule for the divergence operator and the renormalization property for weak solutions of the continuity equation.
We construct an example of bounded divergence-free vector field on the plane, which demonstrates that in general the first property is not sufficient for the second one.
\end{abstract}

\maketitle

\begin{center}
	\small\textsc{Keywords:} renormalized solutions, continuity equation, chain rule property, disintegration theorem
\end{center}

\tableofcontents

\section{Introduction}

Renormalization property was introduced by R.J. DiPerna and P.-L. Lions \cite{DiPernaLions1989} in connection with the problem of uniqueness of weak solutions of the continuity (or transport) equation with non-smooth vector field (in the context of hyperbolic conservation laws a very close concept was introduced earlier by S.N.~Kruzkov \cite{Kruzkov_1970}).
Sufficient conditions, under which the renormalization property holds, were obtained in \cite{DiPernaLions1989,Ambrosio2004,ABC14,BianchiniBonicatto2019} and many other papers.
(Note that uniqueness of weak solutions can also be studied by methods which do not involve the renormalization property, see e.g. \cite{CLR03,ABC14,Panov2008,Panov_2015,Panov2018,Panov_2025}.)
Using the same techniques, under certain assumptions on the vector field $\vi$ and on the scalar field $\rho \colon \R^d \to \R$
for any $\beta \in C^1(\R)$ with $\beta(0)=0$ and $\beta'\in L^\infty(\R)$ one can characterize the distribution $\div (\beta(\rho) \vi)$ in terms of the distributions $\div (\rho \vi)$ and $\div \vi$. For instance, in the smooth setting we have
\begin{equation}\label{chain-rule}
	\div(\beta(\rho) \vi) = (\beta(\rho) - \rho \beta'(\rho)) \div \vi + \beta'(\rho) \div (\rho \vi).
\end{equation}
In the non-smooth setting such equality may fail, even if all terms are well-defined, see e.g. 
\cite{CGSW2017,ModenaBuck2023}.
Similarly, in the non-smooth setting the renormalization property may fail as well (since it implies uniqueness of weak solutions to the corresponding Cauchy problem, and the latter may fail if $\vi$ is not sufficiently regular),
see e.g. \cite{Aizenman_1978,Depauw2003,CLR03,ModenaSzekelyhidi2019}.
However, in \cite{DiPernaLions1989} (see also \cite{ADLM07}) it was proved that if $\vi \in W^{1,p}_\loc(\R^d)$ (with $p \in [1,\infty]$), then \eqref{chain-rule} holds (in the sense of distributions) for all $\rho \in L^q_\loc(\R^d)$ (where $\frac{1}{p}+\frac{1}{q} = 1$) such that $\div(\rho \vi) \in L^1_\loc(\R^d)$.
The proof of this result is based on the same ideas, as the proof of the renormalization property (under similar assumptions) in \cite{DiPernaLions1989}.
This observation naturally leads to the following question: can the renormalization property be deduced directly from some identity similar to \eqref{chain-rule}, without explicitly using some form of weak differentiability of $\vi$?
In the present work we construct an example of a vector field $\vi$ which gives a negative answer to this question, even if $\div \vi = 0$.

In \cite{BG16} the chain rule problem for the divergence operator was studied in the two-dimensional setting under very generic assumptions on $\vi$ and $\rho$, and the distribution $\div(\beta(\rho) \vi)$ was characterized in terms of $\beta$, $\rho$ and the distributions $\div(\vi)$ and $\div(\rho \vi)$.
In particular, suppose for simplicity that $\vi\colon \R^2 \to \R^2$ is a bounded measurable divergence-free vector field with compact support. Let $\rho \in L^\infty(\R^2)$ and suppose that the distribution $\div(\rho \vi)$ can be represented by some signed bounded Borel measure $\mu$.
Then by \cite[Theorem 7.1]{BG16} there exist Borel functions $\rho^\pm \colon \R^2 \to \R$, which agree with $\rho$ a.e. on $\{\vi \ne 0\}$, such that for any $\beta \in C^1(\R)$
\begin{equation}
	\div (\beta(\rho) \vi) = f_1 \mu,
\end{equation}
where
\begin{equation*}
	f_1(x) = \begin{cases}
		\beta'(\rho^+(x)), & \rho^+(x) = \rho^-(x); \\
		\frac{\beta(\rho^+(x)) - \beta(\rho^-(x))}{\rho^+(x) - \rho^-(x)}, & \rho^+(x) \ne \rho^-(x).
	\end{cases}
\end{equation*}
Thus in the non-smooth setting, the generic chain rule formula does not have the same form as \eqref{chain-rule}, and the density of $\div (\beta(\rho) \vi)$ with respect to $\div(\rho \vi)$ is not given simply by $\beta'(\rho)$. 
So in this work our goal is to construct the vector field $\vi$ in such a way that $f_1$ does not involve the ``jump term'' $\frac{\beta(\rho^+(x)) - \beta(\rho^-(x))}{\rho^+(x) - \rho^-(x)}$, but at the same time $\vi$ does not have the renormalization property.

\subsection{Renormalization property}

Let $\vi\colon \R^d \to \R^d$ be a bounded measurable vector field with compact support. We will always assume that 
\begin{equation}\label{div-free}
	\div \vi = 0
\end{equation}
(in the sense of distributions).
Let $T > 0$, 
$\rho_0 \in L^\qq(\R^d)$ and $g \in L^\qq(\R^d)$. 
Consider the Cauchy problem for continuity equation
\begin{equation}\label{Cauchy-propblem}
	\begin{cases}
		\d_t \rho + \div(\rho \vi) = g, \\
		\rho|_{t=0} = \rho_0,
	\end{cases}
\end{equation}
where the solution $\rho$ belongs to $L^\infty([0,T]; L^\qq(\R^d))$.
The problem \eqref{Cauchy-propblem} is understood in the sense of distributions, i.e.
for any $\fhi\in C^1_c([0,T)\times \R^d)$
\begin{equation*}
	\int_0^T \int_{\R^d} (\rho \d_t \fhi + \rho \vi \nabla \fhi)\, dx \, dt
	+ \int_{\R^d} \rho_0 \fhi(0, \cdot) \, dx + \int_0^T \int_{\R^d} g \fhi \, dx \,dt = 0.
\end{equation*}

\begin{defn}
    A function $\beta \in C^1(\R)$ is \emph{admissible}, if $\beta' \in L^\infty(\R)$ and $\beta(0) = 0$.
\end{defn}

\begin{defn}
    A solution $\rho \in L^\infty([0,T]; L^\qq(\R^d))$ of \eqref{Cauchy-propblem} is \emph{renormalized}, if for all admissible functions $\beta \in C^1(\R)$ the function $\beta(\rho)$ solves
    \begin{equation}
        \begin{cases}
            \d_t\beta(\rho) + \div(\beta(\rho) \vi) = \beta'(\rho) g, \\
            \beta(\rho)|_{t=0} = \beta(\rho_0).
        \end{cases}
    \end{equation}
\end{defn}

\begin{defn}
	If for all $\rho_0 \in L^\qq(\R^d)$ and all $g\in L^\qq(\R^d)$ (resp. $g=0$) all solutions $\rho \in L^\infty([0,T]; L^\qq(\R^d))$ of \eqref{Cauchy-propblem} are renormalized, then we say that $\vi$ has the \emph{non-homogeneous (resp. homogeneous) renormalization property}.
\end{defn}

The standard term \emph{renormalization property} usually refers to the homogeneous renormalization property defined above.

When $d=2$, by \eqref{div-free} there exists (unique up to an additive constant) Lipschitz function $f\colon \R^2\to \R$ such that
\begin{equation*}
	\vi = \nabla^\perp f,
\end{equation*}
where $\nabla^\perp f \equiv (-\d_2 f, \d_1 f)$.
The~function $f$ is known as the \emph{stream function} (or \emph{Hamiltonian}) of $\vi$.
Given a Borel map $f\colon \R^2 \to \R$ and a Borel measure $\mu$ on $\R^2$, let $f_\# \mu$
denote the image of $\mu$ under $f$, i.e. for any Borel $B\subset \R$ we have $(f_\# \mu)(B) = \mu(f^{-1}(B))$.
Recall that, given two Borel measures $\mu$ and $\nu$ on $\R$, we write $\mu \perp \nu$ if and only if $\mu$ and $\nu$ are mutually singular, i.e. there exist Borel sets $A,B\subset \R$ such that $A\cap B = \emptyset$, $\R = A \cup B$ and $\mu(A) = \nu(B) = 0$.

\begin{defn}\label{WSP}
    The function $f$ has \emph{weak Sard property}, if
    \begin{equation}
        f_\#(\1_{S \cap E^*}\L^2) \perp \L^1,
    \end{equation}
    where $S = \{\nabla f = 0\}$ is the critical set of $f$, 
    \begin{equation}
    	E^* = \bigcup_{y\in \R} E_y^*
    	\label{E-star}
    \end{equation}
    and $E_y^*$ denotes the union of all connected components $C$ of the level set $\{f=y\}$ such that $\H^1(C) > 0$.
\end{defn}

Here $\H^s$ is the $s$-dimensional Hausdorff measure, and $\mathscr{L}^d$ denotes the $d$-dimensional Lebesgue measure.
The set $E^*$ is Borel, see \cite[Proposition 6.1]{ABC13}.

The following elegant necessary and sufficient condition for the homogeneous renormalization property (and uniqueness of weak solutions) was obtained in \cite{ABC14} (see also \cite[Theorem 1.5]{GK25} for an extension to square-integrable vector fields).

\begin{thm}\label{thm-ABCGK}(\cite[Theorem 4.7]{ABC13})
    Let $d=2$ and 
    let $\vi \in L^\infty(\R^2)$ be vector field with compact support. 
    Let $f$ denote a stream function of $\vi$.
    Then the following statements are equivalent:
    \begin{enumerate}
        \item $f$ has weak Sard property,
        \item $\vi$ has the homogeneous renormalization property,
        \item for any initial condition $\rho_0\in L^\qq(\R^d)$ Cauchy problem \eqref{Cauchy-propblem} with the field $\vi$ has at most one solution in class $L^\infty(0,T;L^\qq(\R^d))$.
    \end{enumerate}
\end{thm}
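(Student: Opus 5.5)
We would reproduce the strategy of Alberti--Bianchini--Crippa: reduce the two-dimensional problem to a family of one-dimensional problems on the level sets of the stream function $f$. The steps, in order, are as follows.

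First, use the coarea formula and the disintegration theorem to write
\begin{equation*}
\L^2 = \int_\R \mu_y \, dy + \1_{S}\L^2 ,
\end{equation*}
where each $\mu_y$ is concentrated on $\{f=y\}\setminus S$. Second, decompose each regular level set $E_y^*$ into connected components. For a.e.\ $y$ these components are closed simple Lipschitz curves (Jordan curves), and $\vi$ is tangent to them. Third, show that a distributional solution $\rho$ of \eqref{Cauchy-propblem} with $g=0$ disintegrates accordingly. Concretely, testing the equation with $\fhi = \psi(t)\,\eta(f(x))\,\zeta(x)$ shows that for a.e.\ $y$ and on each curve $C$, the restriction of $\rho$ solves a one-dimensional continuity equation on the circle $C$. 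In arclength parametrization this equation is $\d_t(\rho\,|\nabla f|^{-1}) + \d_s(\rho)=0$, up to the density factor coming from coarea.

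Fourth, use the fact that on a circle with a nonvanishing speed $|\nabla f|^{-1}$ bounded from below, the one-dimensional equation admits an explicit change of variables to constant speed. There uniqueness and renormalization follow by mollification, since the commutator vanishes in one dimension for a smooth parametrization. Fifth, examine the remaining pieces: the part of $\rho$ living on $S\cap E^*$, and the part on level sets whose components have zero length.
\begin{itemize}
\item On $\{f=y\}$ components of $\H^1$-measure zero, together with the interior of $S$, $\vi=0$ a.e., so $\d_t\rho=0$ there and renormalization is trivial.
\item On $S\cap E^*$, $\vi=0$ a.e. but the set sits inside curves carrying flow.
\end{itemize}

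This gives the equivalences. For (1)$\Rightarrow$(2), the weak Sard property says exactly that $f_\#(\1_{S\cap E^*}\L^2)$ is singular. Hence we can choose test functions depending on $f$ that localize the equation separately to $S\cap E^*$ and to the regular part, so the equation decouples and the pieces above combine into renormalization. The implication (2)$\Rightarrow$(3) is standard: renormalize with $\beta(r)=|r|$ applied to the difference of two solutions, use $\div\vi=0$ and integrate in $x$, which gives $\int|\rho(t)|\,dx=0$. For (3)$\Rightarrow$(1), argue by contradiction. If $f_\#(\1_{S\cap E^*}\L^2)$ has a nonzero absolutely continuous part, pick a set of levels $Y$ of positive measure. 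On these levels one constructs a nontrivial solution with zero initial datum: mass is transferred along each curve of $E^*_y$ between the regular part and the critical part $S\cap C$, which has positive one-dimensional trace measure. This construction is done as in the one-dimensional counterexamples, where a time-dependent density moves through a point where the speed degenerates.

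The main obstacle is the third step together with the decoupling in the fifth: rigorously showing that a distributional solution restricts to solutions of the one-dimensional problems on a.e.\ level curve. This requires the structure theory of level sets of Lipschitz functions in the plane, namely that a.e.\ level set is a finite union of rectifiable closed curves and that $E^*$ is Borel (cf.\ \cite[Proposition 6.1]{ABC13}). I would first try test functions of the form $\eta(f)$, which are Lipschitz and commute with the transport because $\vi\cdot\nabla f=0$. The equation would then localize to level sets, and one would then separate components inside a level set using their compactness and disjointness.
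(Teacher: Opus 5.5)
The paper does not prove this statement. It is quoted from Alberti--Bianchini--Crippa, and your plan follows their strategy in outline: disintegrate along level sets, reduce to constant-speed transport on each curve of $E^*_y$, decouple via weak Sard, and build a counterexample when it fails. As written, though, there is a genuine gap in how you dispose of $S\setminus E^*$.

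The inference ``$\vi=0$ a.e.\ on the zero-length components, hence $\d_t\rho=0$ there'' is exactly the non-local fallacy the theorem is about. $\vi=0$ a.e.\ on $S\cap E^*$ as well, and there $\d_t\rho\neq 0$ can occur. Test functions $\eta(f)$ do not rescue it, because the weak Sard property says nothing about $f_\#(\1_{S\setminus E^*}\L^2)$, which can be absolutely continuous. For example, take a fat Cantor dust of nested hills whose plateaus sit at different heights. Each dust point is a singleton component of its level set, and the dust shares a positive-measure set of levels with the closed level curves on the slopes. So $S\setminus E^*$ must be separated from $E^*_y$ \emph{inside} a single level.

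Concretely, write $f_\#(\1_S\L^2)=h\L^1+\lambda^s$, let $\nu_y$ disintegrate $\1_S\L^2$, and put $\sigma_y:=h(y)\nu_y$ and $\mu_y:=|\nabla f|^{-1}\1_{\{f=y\}}\H^1$. For $\L^1$-a.e.\ $y$ the equation becomes $\d_t(\rho(\mu_y+\sigma_y))+\div(\rho\vi\mu_y)=0$ on $\R^2$, with its initial datum, for all $C^1_c$ test functions. Since $\{f=y\}$ is compact, each of the countably many curves of $E^*_y$ is the intersection of relatively clopen subsets of $\{f=y\}$. A cut-off equal to $1$ near such a clopen $W$ and $0$ near $\{f=y\}\setminus W$ has zero gradient on $\{f=y\}$, so inserting it localizes the equation to $W$. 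Shrinking $W$ to the curves and removing them one by one gives $\d_t(\rho\,\1_{\{f=y\}\setminus E^*_y}\sigma_y)=0$, because $\mu_y$ does not charge $\{f=y\}\setminus E^*_y$. This is why weak Sard is stated with $E^*$. It is exactly the step you left open as the ``main obstacle'', not a routine add-on.

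Three smaller corrections. First, the curve equation in your step 3 holds only under (1). In general, in a consistent parametrization it reads $\d_t(\tilde\rho(\L^1+\tilde\sigma_y))+\d_s\tilde\rho=0$, and this is the form (3)$\Rightarrow$(1) needs.

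Second, in (3)$\Rightarrow$(1) the mechanism is not a point where the speed degenerates. For a.e.\ $y$ the set $S\cap\{f=y\}$ is $\H^1$-null, so $\tilde\sigma_y\perp\L^1$, and it may be atomless, as in Proposition~\ref{atomless}. An explicit nonzero solution with zero datum is $\tilde\rho=t$ on an $\L^1$-null Borel carrier of $\tilde\sigma_y$, and $\tilde\rho(t,s)=-\tilde\sigma_y((s-t,s])$ off it, with $\tilde\sigma_y$ extended periodically. It is bounded on curves where $\sigma_y$ has bounded mass and the period is bounded below. Assembling it over a positive-measure family of such curves, via the converse direction of the disintegration, contradicts (3).

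Third, in (2)$\Rightarrow$(3), $|r|$ is not admissible, so use $C^1$ approximations. Also note that the difference of two solutions vanishes on the open set outside the support of $\vi$; this is what makes $\int|\rho_1-\rho_2|\,dx$ finite.
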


\subsection{Chain rule property}
Now we turn to the steady version of the renormalization property, which for convenience will be called the chain rule property.

\begin{defn}\label{dfn-chain-rule}
	Suppose that $\vi\colon \R^d \to \R^d$ be a bounded measurable vector field with $\div \vi =0$ (in the sense of distributions).
	\begin{enumerate}[label=\emph{(\roman*)}]
	\item If for all $\rho \in L^\qq(\R^d)$, and for all admissible $\beta \in C^1(\R)$
	\begin{equation}
		\div(\rho \vi) = 0 \qquad\text{implies}\qquad \div(\beta(\rho) \vi) = 0,
	\end{equation}
	then we say that $\vi$ has the \emph{homogeneous chain rule property}.

	\item If for all $\rho \in L^\qq(\R^d)$, and for all $g \in L^\qq(\R^d)$ there exists a measurable function $\widehat{\rho}$ which agrees with $\rho$ a.e. on the set $\{\vi \ne 0\}$ such that for all admissible $\beta \in C^1(\R)$
    \begin{equation}
        \div(\rho \vi) = g \qquad\text{implies}\qquad \div(\beta(\rho) \vi) = \beta'(\widehat{\rho}) g,
    \end{equation}
    then we say that $\vi$ has the \emph{non-homogeneous chain rule property}.
	\end{enumerate}
\end{defn}

Note that $\widehat{\rho}$ in general cannot be replaced with $\rho$, since $\div(\rho \vi)$ does not depend on the values of $\rho$ on the set $\{\vi = 0\}$, but $\beta'(\rho) g$ depends on such values (in general).
The dependence of $\div(\rho \vi)$ on the values of $\rho$ on the set $\{\vi = 0\}$ is related to the non-locality of the divergence operator, see e.g. \cite{ABC13}.

\subsection{Main result}
For $d=2$ the homogeneous chain rule property holds for any bounded measurable compactly supported divergence-free vector field $\vi\colon \R^2 \to \R^2$ by \cite[Theorem 7.1]{BG16}, see also \cite[Theorem 1.4]{GK25} for a generalization of this result.
But the homogeneous renormalization property is equivalent to the weak Sard property,
and in \cite[Section 4]{ABC13} there was constructed an example of Lipschitz function which does not have the weak Sard property.
Hence the homogeneous chain rule property is always strictly weaker than the homogeneous renormalization property.

In this work we prove that even non-homogeneous chain rule property is not sufficient for the homogeneous renormalization property:

\begin{thm}\label{main}
    Let $d = 2$. Then there exists a compactly supported vector field $\vi\colon \R^2 \to \R^2$
    such that $\vi$ has the non-homogeneous chain rule property, but does not have the homogeneous renormalization property.
\end{thm}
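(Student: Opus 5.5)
By Theorem~\ref{thm-ABCGK}, failure of the homogeneous renormalization property for $\vi=\nabla^\perp f$ is equivalent to failure of the weak Sard property for the stream function $f$. So we will construct a compactly supported Lipschitz $f\colon\R^2\to\R$ with two features. First, $f$ violates the weak Sard property: the measure $f_\#(\1_{S\cap E^*}\L^2)$ has a nontrivial absolutely continuous part. Second, the ``jump term'' in \cite[Theorem 7.1]{BG16} never contributes, which will give the non-homogeneous chain rule property.

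Concretely, we start from the Alberti--Bianchini--Crippa example \cite[Section 4]{ABC13}, or a variant of it. In that example the critical set $S$ contains a set $K$ of positive area (e.g.\ a product of fat Cantor sets). The set $K$ lies on nontrivial level-set components, and $f|_K$ pushes $\L^2$ forward to a measure charging $\L^1$. We will modify $f$ so that the level sets through $K$ have a specific topology. Each connected component $C$ of $\{f=y\}$ with $\H^1(C)>0$ should be a simple closed Lipschitz curve, or a single closed curve with pieces of $K$ attached, such that $\vi$ does not vanish identically on $C$ and $\rho$ cannot ``jump'' across it. We take the BG16 functions $\rho^\pm$ to be the traces of $\rho$ from the two sides of the trajectories. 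The jump term is supported where $\rho^+\ne\rho^-$, and we want to show that $\mu=\div(\rho\vi)$ gives no mass to that set.

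The key steps, in order, are the following.
\begin{enumerate}
\item Build $f$ explicitly, e.g.\ as a Cantor-type construction in which the nested annuli all rotate in the same direction, and verify $f_\#(\1_{S\cap E^*}\L^2)\not\perp\L^1$. This gives failure of renormalization via Theorem~\ref{thm-ABCGK}.
\item Recall the disintegration framework of \cite{ABC13,BG16}. Given $\div(\rho\vi)=g\in L^\infty$, disintegrate $\L^2$ along the regular level components $C\subset E^*$. On a.e.\ such $C$ this reduces the equation to a one-dimensional ODE $\frac{d}{ds}(\rho\circ\gamma)\,|\vi\circ\gamma| = g\circ\gamma$ along the arclength parametrization $\gamma$. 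Hence $\rho$ has a Lipschitz (absolutely continuous) representative on each trajectory, so $\rho^+=\rho^-$ at points carried by $|\vi|\L^2$.
\item Handle the part of $\mu=g\L^2$ sitting on $\{\vi=0\}$. Since $g\in L^\infty$, $\mu\ll\L^2$. On $S$, we will show that $g=0$ a.e., or at least that $\rho^+=\rho^-$ $\mu$-a.e. there. We then set $\widehat\rho=\rho^+$, which agrees with $\rho$ a.e.\ on $\{\vi\ne0\}$. The BG16 formula then reads $\div(\beta(\rho)\vi)=\beta'(\widehat\rho)g$.
\end{enumerate}

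The main obstacle is step~3, i.e.\ the interplay with the critical set $S\cap E^*$, which has positive measure precisely because weak Sard fails. There the non-local divergence can pick up mass: the failure of renormalization comes exactly from a nonzero $\div(\rho\vi)$ concentrated on $S$, realized as jumps of $\rho$ across level sets inside $S$. So the construction must be tuned so that, for steady solutions with $L^\infty$ source, such concentration forces either $g=0$ on $S$ or a continuous profile across level values. Our first attempt will use the structure of $f_\#$: the measure $\mu$ restricted to $S$ disintegrates over $y$, with $\div(\rho\vi)$ on $S$ given by the derivative in $y$ of the flux $\int_{\{f=y\}}\rho\,\vi\cdot n$. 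Since $\mu$ is absolutely continuous, this flux is Lipschitz in $y$ and corresponds to a continuous choice of $\rho^\pm$. We expect to have to exploit the special geometry of the constructed $f$ to check that the two one-sided values coincide; the time-dependent problem lacks this regularity, which is why renormalization can still fail.
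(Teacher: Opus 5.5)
There is a genuine gap. It sits where you locate the main obstacle, and Step~2 is also not correct as stated. By Proposition~\ref{disint-dive}, along a consistent parametrization $\gamma$ of a.e.\ level curve the equation becomes $D(\rho\circ\gamma)=(g\circ\gamma)(\L^1+\widetilde\mu_y)$. Here $\widetilde\mu_y$ comes from the disintegration $\mu_y$ of $\1_{S\cap E^*}\L^2$ with respect to $f$. By the coarea formula the curve meets $S$ in an $\H^1$-null set. When weak Sard fails, this set nevertheless carries $\mu_y\neq0$ for a positive-measure set of $y$. So $\widetilde\mu_y\perp\L^1$, and $\rho\circ\gamma$ is in general only $BV$, not Lipschitz. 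Your ODE holds only on the arcs of the curve outside $S$, and the values of $g$ on $S$ enter exactly through the singular term $(g\circ\gamma)\widetilde\mu_y$. Correspondingly, the $\rho^\pm$ of \cite{BG16} are one-sided values of $\rho\circ\gamma$ \emph{along} the trajectory, not traces from the two sides. Transversally $\rho$ may jump freely: any $\rho=h(f)$ solves $\div(\rho\vi)=0$. The jump term arises precisely at atoms of $\widetilde\mu_y$ charged by $g$.

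None of your proposed mechanisms for Step~3 works.
\begin{itemize}
\item You cannot show $g=0$ a.e.\ on $S$. Any bounded $g$ supported in $S$ with $\int\widetilde g\,d\widetilde\mu_y=0$ on a.e.\ curve is an admissible source.
\item The flux $\int_{\{f=y\}}\rho\,\vi\cdot n$ vanishes identically, since $\vi=\nablap f$ is tangent to the level sets.
\item Absolute continuity of $g\L^2$ holds for every $g\in L^\infty$ and every field. That includes the original example of \cite{ABC14}, which the paper has to modify to obtain the chain rule, so it cannot rule out jumps.
\end{itemize}

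The missing idea is a geometric requirement on $f$ itself: $f$ should be monotone, and $\mu_y$ should be atomless for a.e.\ $y$. Then $D(\rho\circ\gamma)$ is an atomless measure, so $\rho\circ\gamma$ has a continuous representative $\overline\rho$. Theorem~\ref{chain-rule-BV} then gives $D\beta(\overline\rho)=\beta'(\overline\rho)\,\widetilde g\,(\L^1+\widetilde\mu_y)$ with no jump part. Gluing the $\overline\rho$'s into $\widehat\rho$ and reading Proposition~\ref{disint-dive} backwards yields the property (Lemma~\ref{atomless-chain-rule}).

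The paper's construction is tuned exactly for this. At every generation, a level curve crosses only $2$ of the $16$ subrectangles of each rectangle. A Lebesgue-point argument then gives $\mu_y(A_m)=\mu(A_m)/\L^1(f(A_m))=2^{-m}\mu(C_0)/d_0$ for the generation-$m$ rectangle $A_m$ containing a given point of $C$, hence no atoms. Your Step~1 (``nested annuli rotating in the same direction'') does not aim at this property. As it stands, the proposal only proves the failure of renormalization, not the non-homogeneous chain rule.
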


In order to obtain the desired vector field $\vi$ in Section~\ref{sect-construction} we construct its stream function $f$ in such a way that $f$ does not have the weak Sard property. By Theorem~\ref{thm-ABCGK} it follows that $\vi$ does not have the homogeneous renormalization property.

Our construction of $f$ is similar to the one introduced in \cite[Section 4]{ABC14}, but in our modification we ensure that the disintegration of the Lebesgue measure (restricted to the critical set of $f$) under $f$ is such that the non-homogeneous chain rule property holds, unlike in the original example from \cite{ABC14}.
The proof of the non-homogeneous chain rule property is given in Section~\ref{sect-proof}, see also Lemma~\ref{atomless-chain-rule}.

\section{Preliminaries}

\subsection{Disintegration of measures}\label{subsect-disintegration}
Let us recall that a family of Borel measures $\{\mu_y\}_{y\in \R}$ on $\R^d$ is Borel
if for any Borel set $A\subset \R^d$ the function $y\mapsto \mu_y(A)$ is Borel.
Given two Borel measures $\mu$ and $\nu$ on $\R$, as usual we write $\mu \ll \nu$ if and only if $\mu$ is absolutely continuous with respect to $\nu$, i.e. for any Borel set $N\subset \R$ with $\nu(N) = 0$ we have $\mu(N) = 0$.
The following decomposition result holds, see e.g. \cite[volume II, \S 10.4]{bogachev}:

\begin{thm}[disintegration theorem]
	Let $\mu$ be a finite non-negative Borel measure on $\R^d$
	and let $f\colon \R^d \to \R$ be a Borel function.
	Let $\nu$ be a Borel measure on $\R$ such that $f_\# \mu \ll \nu$.
	Then there exists a Borel family $\{\mu_y\}_{y\in \R}$
	such that
	$\mu_y$ is concentrated on $\{f=y\}$ for $\nu$-a.e. $y$ and
	\begin{equation*}
		\mu = \int_{\R} \mu_y \, d\nu(y)
	\end{equation*}
	in the sense that
	\begin{equation}\label{mu-disint}
		\mu(A) = \int_{\R} \mu_y(A)\, d\nu(y)
	\end{equation}
	for all Borel sets $A\subset \R^d$.
	If $\{\tilde \mu_y\}_{y\in \R}$ is another such family, then $\tilde \mu_y = \mu_y$ for $\nu$-a.e. $y$.
\end{thm}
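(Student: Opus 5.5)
Since $f_\#\mu \ll \nu$, we may take $\nu$ to be $\sigma$-finite (the case $\nu$ not $\sigma$-finite reduces to replacing $\nu$ by $f_\#\mu$ on the set where the Radon--Nikodym argument applies, but here we simply assume $\nu$ is $\sigma$-finite, as is implicit in the cited statement). The plan is to first disintegrate with respect to $\lambda := f_\#\mu$ (a finite measure on $\R$) and then change the reference measure via the Radon--Nikodym density $h := d\lambda/d\nu$. Concretely, if $\{\lambda_y\}$ is a disintegration of $\mu$ with respect to $\lambda$ (probability measures concentrated on $\{f=y\}$ for $\lambda$-a.e.\ $y$), we set $\mu_y := h(y)\lambda_y$. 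Then
\[
\int_\R \mu_y(A)\,d\nu(y) = \int_\R \lambda_y(A)\,h(y)\,d\nu(y) = \int_\R \lambda_y(A)\,d\lambda(y) = \mu(A).
\]
On the set $\{h=0\}$ we have $\mu_y=0$, which is trivially concentrated on $\{f=y\}$. On $\{h>0\}$ the concentration property transfers, because $\lambda$ and $\nu$ are equivalent there.

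For the disintegration with respect to $\lambda$, consider for each Borel $A\subset\R^d$ the finite measure $B\mapsto\mu(A\cap f^{-1}(B))$. This measure is $\ll\lambda$, so it has a density $k_A(y)$ with $0\le k_A\le 1$ $\lambda$-a.e. Now restrict $A$ to a countable algebra $\mathcal A$ generating the Borel $\sigma$-algebra of $\R^d$ (finite unions of dyadic rational boxes). Off a single $\lambda$-null set the map $A\mapsto k_A(y)$ is then finitely additive on $\mathcal A$, monotone, and satisfies $k_{\R^d}(y)=1$.

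The main obstacle is upgrading finite additivity on $\mathcal A$ to countable additivity. I would handle it through inner regularity. Require in addition that, for each box $Q\in\mathcal A$, $k_Q(y)=\sup_n k_{K_n}(y)$ for a fixed countable family of compact boxes $K_n\uparrow \mathrm{int}\,Q$; this costs only countably many null sets, by monotone convergence of the corresponding measures. With this, a standard compactness argument yields countable additivity on $\mathcal A$, and Carathéodory's theorem extends each $k_\cdot(y)$ to a Borel probability measure $\lambda_y$. Alternatively, one can push everything to $[0,1]$ via a Borel isomorphism and use distribution functions on rationals with right-continuous regularization.

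It remains to check the remaining properties. A monotone class argument shows that $y\mapsto\lambda_y(A)$ is Borel and that $\mu(A\cap f^{-1}(B))=\int_B\lambda_y(A)\,d\lambda$ holds for all Borel $A$; on the exceptional null set we put $\lambda_y:=0$. For concentration, apply this identity with $A=f^{-1}(B)$: it gives $\lambda_y(f^{-1}(B))=\1_B(y)$ for $\lambda$-a.e.\ $y$. Doing this for countably many rational intervals $B$ shrinking to $\{y\}$ then yields $\lambda_y(\{f\neq y\})=0$ a.e.

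Uniqueness follows from the same countable-algebra device. If $\{\tilde\mu_y\}$ is another Borel family with the stated properties, then for $A\in\mathcal A$ and Borel $B$,
\[
\int_B\mu_y(A)\,d\nu = \mu(A\cap f^{-1}(B)) = \int_B\tilde\mu_y(A)\,d\nu,
\]
using concentration on $\{f=y\}$. Hence $\mu_y(A)=\tilde\mu_y(A)$ for $\nu$-a.e.\ $y$ (here $\sigma$-finiteness of $\nu$ is used). Intersecting over the countably many $A\in\mathcal A$ and applying the $\pi$-$\lambda$ theorem, the two finite measures coincide for $\nu$-a.e.\ $y$. Their finiteness holds a.e.\ because $\int\mu_y(\R^d)\,d\nu=\mu(\R^d)<\infty$.
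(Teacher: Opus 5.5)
Your argument is correct in substance and follows the standard route. The paper gives no proof of its own; it refers to Bogachev (Vol.~II, \S10.4), where conditional measures are built in essentially the same way:
\begin{itemize}
\item Radon--Nikodym densities of $B\mapsto\mu(A\cap f^{-1}(B))$ for $A$ in a countable generating algebra;
\item countable additivity via a compact approximating class;
\item concentration on fibres via a countable separating family.
\end{itemize}
Your rescaling $\mu_y=h(y)\lambda_y$ with $h=d(f_\#\mu)/d\nu$ is the routine passage to a general reference measure. Two corrections are needed, though.

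\textbf{The inner-regularity step fails as written.} For compact $K_n\uparrow\operatorname{int}Q$, monotone convergence gives $\sup_n k_{K_n}=k_{\operatorname{int}Q}$ a.e. This differs from $k_Q$ on a set of positive $\lambda$-measure whenever $\mu(Q\setminus\operatorname{int}Q)>0$; for example, take $Q=[0,1)^d$ and $\mu$ carried by $\{0\}\times[0,1)^{d-1}$. So the identity you require cannot be arranged off a null set. The fix is to approximate $Q$ itself. A half-open dyadic box is an increasing union of closed dyadic boxes: shrink the open ends by $2^{-n}$ and truncate infinite sides at $\pm n$. These closed boxes must also belong to $\mathcal A$, for instance by taking the algebra generated by dyadic boxes of all types, which is still countable. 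This matters because the compactness argument uses finite additivity and monotonicity of $k_\cdot(y)$ on sets such as $A_j\setminus K_j$.

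\textbf{The non-$\sigma$-finite case does not reduce to the $\sigma$-finite one.} Your parenthetical claim is wrong: the statement is false without $\sigma$-finiteness. Take $\nu$ the counting measure, $d=1$, $f(x)=x$ and $\mu=\1_{[0,1]}\L^1$. Then $f_\#\mu\ll\nu$, and ``$\nu$-a.e.'' means ``every''. Hence $\mu_y=c_y\delta_y$ for all $y$, and $\sum_{y\in[0,1]}c_y=\mu([0,1])=1$. This forces $\mu(D)=1$ for the countable set $D=\{y\in[0,1]:c_y>0\}$, which is absurd since $\L^1(D)=0$. So $\sigma$-finiteness is a necessary implicit hypothesis. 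This is harmless for the paper, which only uses $\nu=\L^1$.
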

The family $\{\mu_y\}$ given by the theorem above is called \emph{the disintegration of $\mu$ with respect to $f$ (and $\nu$)}. 
Whenever $f_\# \mu \ll \L^1$ it is convenient to assume that $\nu = \L^1$.

For any non-negative Borel function $\fhi \colon \R^d \to \R$ the following generalization of \eqref{mu-disint} holds:
\begin{equation}
	\int_{\R^d} \fhi \, d\mu = \int_{\R} \int_{\R^d} \fhi \, d\mu_y \, d\nu(y).
\end{equation}
(Indeed, when $\fhi$ is an indicator of a Borel set, the equality above follows from \eqref{mu-disint}.
Then by standard arguments it is extended for all bounded non-negative Borel functions $\fhi$ and then for all non-negative Borel functions.)

If $\mu$ is finite measure, then by \eqref{mu-disint} with $A=\R^d$ the map $y\mapsto \mu_y(\R^d)$ belongs to $L^1(\nu)$.

Using Hahn decomposition it is straightforward to extend the disintegration theorem for finite signed measures $\mu$
(in this case one has to replace $f_\# \mu \ll \nu$ with $f_\# |\mu| \ll \nu$).

\subsection{Coarea formula}\label{subsect-coarea}

Let us briefly recall the coarea formula for Lipschitz functions,
which can be viewed as a particular case of the disintegration theorem when $\mu = \1_{\{\nabla f \ne 0\}}\L^2$ and $f \colon \R^2 \to \R$ is Lipschitz.
See for instance \cite[Theorem 3.40]{AFP} and \cite[Theorem 1.1]{MSZ_2003}.

\begin{thm}[coarea formula]
	Suppose that $f\colon \R^2 \to \R$ is a compactly supported Lipschitz function.
	Then
	\begin{equation}
		\1_{\{\nabla f \ne 0\}}\L^2 = \int_{\R} \frac{1}{|\nabla f|} \1_{\{f=y\}} \H^1 \, dy.
	\end{equation}
\end{thm}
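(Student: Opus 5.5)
We prove the coarea formula for a compactly supported Lipschitz $f\colon\R^2\to\R$. The plan has three steps: state the classical coarea formula for Lipschitz maps, apply it to a test function carrying the weight $1/|\nabla f|$, and then rewrite the result in the disintegration language of Subsection~\ref{subsect-disintegration}.

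\textbf{Step 1: the classical formula.} By Rademacher's theorem $f$ is differentiable $\L^2$-a.e., so $\nabla f$ is defined a.e. and is bounded. The coarea formula for Lipschitz maps (see \cite[Theorem 3.40]{AFP}, or the Federer formula) says that for every non-negative Borel $\psi\colon\R^2\to[0,\infty]$
\begin{equation*}
\int_{\R^2}\psi\,|\nabla f|\,dx=\int_{\R}\int_{\{f=y\}}\psi\,d\H^1\,dy,
\end{equation*}
and that $y\mapsto\int_{\{f=y\}}\psi\,d\H^1$ is $\L^1$-measurable. If one wants to re-derive this rather than quote it, the route is the standard one. First check it for affine $f$. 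Next use a Lusin-type approximation of $f$ by $C^1$ functions, outside sets of small measure, with the set $\{\nabla f\ne 0\}$ covered by countably many pieces on which $f$ is close to affine; this is Federer's decomposition. Then handle the small exceptional sets with the coarea inequality $\int^*\H^1(A\cap\{f=y\})\,dy\le C\,\mathrm{Lip}(f)\,\L^2(A)$, which follows from Eilenberg's inequality.

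\textbf{Step 2: choice of test function.} Fix a Borel set $A\subset\R^2$ and take
\begin{equation*}
\psi=\frac{\1_{A\cap\{\nabla f\ne0\}}}{|\nabla f|},
\end{equation*}
extended by $0$ where $\nabla f=0$ or where $\nabla f$ is undefined. Here we choose a Borel representative of $\nabla f$. The left-hand side becomes $\L^2(A\cap\{\nabla f\ne0\})$. The right-hand side becomes $\int_\R\int_{\{f=y\}}\frac{\1_A\1_{\{\nabla f\ne0\}}}{|\nabla f|}\,d\H^1\,dy$.

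\textbf{Step 3: the main obstacle.} We must remove the factor $\1_{\{\nabla f\ne0\}}$ inside the inner integral, so that the density matches the stated $\frac{1}{|\nabla f|}\1_{\{f=y\}}\H^1$ with the convention $1/0=\infty$ on the critical set. Apply Step 1 with $\psi=\1_N$, where $N=\{\nabla f=0\}\cup\{\text{$f$ not differentiable}\}$. Since $|\nabla f|=0$ on $\{\nabla f=0\}$ and $\L^2$-a.e.\ point of $N$ lies there, the left-hand side is $0$. Hence $\H^1(N\cap\{f=y\})=0$ for a.e.\ $y$, so the critical set is $\H^1$-negligible on a.e.\ level set. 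Consequently the measures $\frac{1}{|\nabla f|}\1_{\{f=y\}}\H^1$ and $\frac{\1_{\{\nabla f\ne0\}}}{|\nabla f|}\1_{\{f=y\}}\H^1$ coincide for a.e.\ $y$. This also shows that the convention on the critical set is irrelevant.

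The same application of Step 1 to $\psi$ as in Step 2 gives the Borel measurability in $y$, up to a null set, that the disintegration framework requires. Because $f$ has compact support, the total mass $\L^2(\{\nabla f\ne0\})$ is finite. So the family $\mu_y=\frac{1}{|\nabla f|}\1_{\{f=y\}}\H^1$ is the disintegration of $\1_{\{\nabla f\ne0\}}\L^2$ with respect to $f$ and $\L^1$. Uniqueness then follows from the disintegration theorem, which also confirms the a priori fact $f_\#(\1_{\{\nabla f\ne0\}}\L^2)\ll\L^1$.
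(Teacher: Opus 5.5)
Your proposal is correct and follows the same route as the paper. The paper gives no independent argument: it invokes the classical coarea formula (citing \cite[Theorem 3.40]{AFP} and \cite[Theorem 1.1]{MSZ_2003}) and reads it as the disintegration of $\1_{\{\nabla f \ne 0\}}\L^2$ under $f$. Your Steps 2--3 make that reading explicit, via the test function $\1_{A\cap\{\nabla f\ne 0\}}/|\nabla f|$ and the observation that the critical set is $\H^1$-negligible on a.e.\ level set, which makes the convention $1/0=\infty$ harmless.
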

Here $|\cdot|$ denotes the standard Euclidean norm on $\R^2$.

\subsection{Chain rule for functions of bounded variation}
As usual for any open interval $\Omega \subset \R$ we denote with
$BV(\Omega)$ the set of all integrable functions $u\colon \Omega \to \R$
whose distributional derivative is a finite Borel measure $Du$ on $\Omega$.
We will need the Vol'pert chain rule formula for a composition of smooth function with a function of bounded variation,
see e.g. \cite[Theorem 3.99]{AFP}. 

Recall that given $u \in BV(\Omega)$ the \emph{approximate discontinuity set $S_u \subset \Omega$} is the complement
of the set of all $x\in \Omega$ for which there exists $u^*(x)$ such that
\begin{equation*}
	\lim_{r\to 0} \frac{1}{2r}\int_{x-r}^{x+r} |u(y) - u^*(x)| \, dy = 0.
\end{equation*}
The function $u^* \colon \Omega \setminus S_u \to \R$ is called the \emph{precise representative of $u$}.
It is well-known that if $u \in BV(\Omega)$, then the set $S_u$ is at most countable (see e.g. \cite[Theorem 3.78]{AFP} with $N=1$),
and that $S_u$ coincides with the set of all atoms of $Du$.

For our purposes the following simplified result will be sufficient:

\begin{thm}\label{thm-AFP-3-99}(chain rule for BV functions with atomless derivative)\label{chain-rule-BV}
	Let $\Omega \subset \R$ be a bounded open interval and suppose that $u \in BV(\Omega)$ has atomless $Du$.
	Then for any $\beta\in C^1(\R)$ we have $\beta(u) \in BV(\Omega)$ with
    \begin{equation}
        D \beta(u) = \beta'(u^*) Du,
    \end{equation}
    where $u^*$ is the precise representative of $u$.
\end{thm}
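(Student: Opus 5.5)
The plan is to prove Theorem~\ref{chain-rule-BV} directly, by a Riemann--Stieltjes argument, using a continuous representative of $u$. (Alternatively one can specialize the full Vol'pert formula \cite[Theorem 3.99]{AFP}: since $S_u$ is exactly the set of atoms of $Du$, we get $S_u=\emptyset$, the jump part of $Du$ vanishes, and $\tilde u=u^*$ everywhere. I would nevertheless write out the self-contained argument.)

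\emph{Step 1: a continuous representative.} Let $\Omega=(a,b)$. Fix $x_0\in\Omega$ and define $\bar u(x)=c+Du((x_0,x))$ for $x>x_0$, with the analogous formula for $x\le x_0$. The constant $c$ is chosen so that $\bar u=u$ a.e.; this is possible since $D(u-\bar u)=0$ in $\mathcal D'(\Omega)$. Because $Du$ is a finite atomless measure, $x\mapsto Du((x_0,x))$ is continuous and bounded on $\Omega$. So $\bar u$ is bounded and continuous, and $Du((s,t))=\bar u(t)-\bar u(s)$ for $s<t$. A continuous function that equals $u$ a.e. satisfies the defining limit of the precise representative at every point. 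Hence $S_u=\emptyset$ and $u^*=\bar u$ on $\Omega$. Set $K=\overline{\bar u(\Omega)}$, which is compact, and $L=\sup_K|\beta'|<\infty$.

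\emph{Step 2: the identity on intervals.} Fix $s<t$ in $\Omega$ and a partition $s=t_0<\dots<t_n=t$ whose mesh tends to $0$. By the mean value theorem,
\[
\beta(\bar u(t_{i+1}))-\beta(\bar u(t_i))=\beta'(\xi_i)\bigl(\bar u(t_{i+1})-\bar u(t_i)\bigr)
\]
for some $\xi_i$ between $\bar u(t_i)$ and $\bar u(t_{i+1})$. By the intermediate value theorem (using continuity of $\bar u$), $\xi_i=\bar u(\tau_i)$ for some $\tau_i\in[t_i,t_{i+1}]$. Summing over $i$ gives
\[
\beta(\bar u(t))-\beta(\bar u(s))=\int_{(s,t)} g_n\,dDu,
\qquad g_n=\sum_i \beta'(\bar u(\tau_i))\1_{[t_i,t_{i+1})}.
\]
The endpoints $t_i$ carry no mass, since $Du$ is atomless. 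Uniform continuity of $\bar u$ on $[s,t]$ and continuity of $\beta'$ on $K$ give $g_n\to\beta'(\bar u)$ uniformly on $[s,t]$, with $|g_n|\le L$. Dominated convergence against the finite measure $|Du|$ then yields
\[
\beta(\bar u(t))-\beta(\bar u(s))=\int_{(s,t)}\beta'(\bar u)\,dDu .
\]

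\emph{Step 3: conclusion.} The measure $\nu=\beta'(u^*)Du$ is finite, with $|\nu|\le L|Du|$, and by Step~2 it satisfies $\nu((s,t))=\beta(\bar u(t))-\beta(\bar u(s))$. Testing against $\fhi\in C^1_c(\Omega)$ with Fubini, as in the standard proof that a function with these interval increments has distributional derivative $\nu$, gives $D\beta(\bar u)=\nu$. Moreover $\beta(\bar u)=\beta(u)$ a.e., and $\beta(u)$ is bounded, hence integrable on the bounded interval $\Omega$. Therefore $\beta(u)\in BV(\Omega)$ with $D\beta(u)=\beta'(u^*)Du$.

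The only delicate point is Step~2. There one needs atomlessness twice: to replace the mean-value points $\xi_i$ by values $\bar u(\tau_i)$ via continuity, and to ignore the partition points when converting the sum into an integral. These are exactly the places where the jump term would appear in the general Vol'pert formula.
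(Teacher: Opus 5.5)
Your proof is correct. It takes a genuinely different route from the paper, which gives no proof of this statement. The paper simply specializes Vol'pert's chain rule \cite[Theorem 3.99]{AFP}, using the facts recalled just before the theorem: $S_u$ coincides with the set of atoms of $Du$. Atomlessness therefore forces $S_u=\emptyset$, removes the jump term, and makes the approximate limits equal to $u^*$ everywhere. That is exactly your parenthetical alternative.

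Your main argument is instead a self-contained one-dimensional Riemann--Stieltjes proof. You build the continuous representative $\bar u$ from the distribution function of $Du$, which proves $S_u=\emptyset$ and $u^*=\bar u$ directly instead of quoting \cite[Theorem 3.78]{AFP}. The mean value and intermediate value theorems on a partition, together with uniform convergence of $g_n$ against the finite measure $|Du|$, then give the interval identity, and Fubini upgrades it to the distributional one.

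This route is elementary and avoids the decomposition $Du=D^au+D^cu+D^ju$ and approximate limits entirely. It also handles an arbitrary $\beta\in C^1(\R)$ without a global Lipschitz bound, since you only use $\sup_K|\beta'|$ on the compact range of $\bar u$. The general formula in \cite{AFP}, by contrast, is stated for Lipschitz functions, so one would first have to modify $\beta$ off the bounded range of $u$.

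The citation route, in exchange, gives the full formula including the jump term $\beta(u^+)-\beta(u^-)$ and works in any dimension. That jump term is what appears when $\widetilde{\mu}_y$ has atoms, the situation the paper's construction is designed to exclude.
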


\goodbreak

\subsection{Disintegration of the divergence equation}

We say that a compactly supported Lipschitz function $f\colon \R^2 \to \R$ is \emph{monotone} if for all $y\in \R \setminus \{0\}$
the level set $\{f=y\}$ is connected. (Almost identical definition of monotonicity is given for instance in \cite[p. 358]{Engelking1989}, with the additional requirement that $\{f=0\}$ is connected. See also \cite[Theorem~5.18]{GK25}.)
By the well-known results in Geometric Measure Theory (see e.g. \cite[Theorem~2.5 and Lemma~2.11 (ii)]{ABC14}),
for a.e. $y\in \R$ we have $\H^1(\{f=y\}) < \infty$, and for a.e. $y\in \R$ such that $\H^1(\{f=y\})>0$ there exist $L_y>0$ and a closed simple Lipschitz curve $\gamma \colon [0,L_y] \to \R^2$
such that $\{f=y\}$ coincides with $\gamma([0,L_y])$ (up to $\H^1$-negligible subsets) and
\begin{equation*}
	\gamma' = (\nablap f) \circ \gamma
\end{equation*}
a.e. on $[0,L_y]$. Such $\gamma$ will be called \emph{consistent parametrization of $\{f=y\}$}.
Evidently $\gamma$ depends also on $y$, but we omit this dependence in order to simplify the notation.
For convenience we will view $\gamma$ as an $L_y$-periodic function on $\R$.

\begin{prop}\label{disint-dive}
	Let $f\colon \R^2 \to \R$ be a monotone compactly supported Lipschitz function and $\vi = \nabla^\perp f$.
	Suppose that $\mu = \1_{\{\nabla f = 0\} \cap E^*} \L^2$ and $f_\# \mu \ll \L^1$.
	Let $\{\mu_y\}_{y\in \R}$ denote the disintegration of $\mu$ with respect to $f$ (and $\L^1$).
	Then Borel functions $\rho \in L^\qq(\R^2)$ and $g\in L^\qq(\R^2)$ solve (in the sense of distributions)
	\begin{equation}\label{dive-rho-v-eq-g}
		\div (\rho \vi) = g
	\end{equation}
	if and only if $g=0$ a.e. on $\R^2 \setminus E^*$ and for a.e. $y$ such that $\H^1(\{f=y\})>0$
	we have $\widetilde{\rho} \in BV_\loc(\R)$ and
	\begin{equation}\label{dive-along-curve-param}
		D \widetilde{\rho} = \widetilde{g} \cdot (\L^1 + \widetilde{\mu}_y)
	\end{equation}
	in $\mathscr{D}'(\R)$, where $\widetilde{\rho} = \rho \circ \gamma$, $\widetilde{g} = g \circ \gamma$,
	$\widetilde{\mu}_y$ is the image of $\mu_y$ under $\gamma^{-1}$, and $\gamma$ is a consistent parametrization of $\{f=y\}$.
	(The set $E^*$ was defined in \eqref{E-star}.)
\end{prop}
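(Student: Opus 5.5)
The plan is to test the equation \eqref{dive-rho-v-eq-g} against functions of the form $\fhi(x)=\psi(x)\,\eta(f(x))$ and to disintegrate everything along the level sets of $f$. The weak formulation reads $-\int \rho\, \vi\cdot\nabla\fhi\,dx = \int g\fhi\,dx$ for all $\fhi\in C^1_c(\R^2)$. Since $\vi=\nablap f$ vanishes on $S=\{\nabla f=0\}$, the left-hand side only involves $\{\nabla f\ne0\}$. By the coarea formula it equals
\begin{equation*}
-\int_\R\int_{\{f=y\}}\rho\,\frac{\nablap f\cdot\nabla\fhi}{|\nabla f|}\,d\H^1\,dy .
\end{equation*}
For a.e.\ $y$ with $\H^1(\{f=y\})>0$ we use the consistent parametrization $\gamma$. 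The relations $\gamma'=(\nablap f)\circ\gamma$ and $|\gamma'|=|\nabla f|\circ\gamma$ turn the inner integral into $\int_0^{L_y}\widetilde\rho\,(\fhi\circ\gamma)'\,dt$. For $\fhi=\psi\,\eta(f)$ we have $\vi\cdot\nabla\fhi=\eta(f)\,\vi\cdot\nabla\psi$, so $y$ enters only through the weight $\eta(y)$.

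For the right-hand side I split $\R^2$ into three pieces: $\{\nabla f\ne0\}$, $S\cap E^*$ and $S\setminus E^*$. On the first piece the coarea formula gives $\int\int_0^{L_y}\widetilde g\,\widetilde\psi\,dt\,\eta(y)\,dy$. On the second, the disintegration gives $\int\eta(y)\int g\psi\,d\mu_y\,dy$, which after pushing forward by $\gamma^{-1}$ becomes $\int\eta(y)\int\widetilde g\widetilde\psi\,d\widetilde\mu_y\,dy$. Here I use that $\mu_y$ is concentrated on $\{f=y\}$ and, since $\mu$ lives on $E^*$ and $f$ is monotone, on the curve $\gamma([0,L_y])$ up to a null set; this requires an a.e.\ statement that $\mu_y$ charges no $\H^1$-null exceptional part, and I would get it from $\mu_y\ll\H^1\llcorner\{f=y\}$ or by working directly on $\gamma([0,L_y])$.

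The third piece, $S\setminus E^*$, is where the condition $g=0$ a.e.\ outside $E^*$ comes from. On $\R^2\setminus E^*$ the level sets are ($\H^1$-null) components, and $\vi=0$ a.e.\ there: the set $\{\nabla f\ne0\}\setminus E^*$ is Lebesgue-null by coarea. Hence one expects $\div(\rho\vi)$ to give no mass to that set. To prove it I would take $\fhi$ approximating $\1_K$ for compact $K\subset S\setminus E^*$, built as functions of $f$ alone: $\fhi=\eta(f)\chi$, where $\chi$ localizes to the single component in the monotone case. Then $\vi\cdot\nabla\fhi=\eta(f)\,\vi\cdot\nabla\chi$, which is supported on curves of positive length and gives no contribution in the limit. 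I expect this step to be the main technical obstacle: one has to show that the left-hand side cannot see sets of $y$ whose level sets are $\H^1$-null, or more precisely to compute $f_\#(\1_{S\setminus E^*}\L^2)$ and separate it from the other terms. Monotonicity helps here, since each level set $\{f=y\}$, $y\neq0$, is a single component, so $S\setminus E^*\subset\bigcup_{y:\H^1(\{f=y\})=0}\{f=y\}\cup\{f=0\}$.

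Once the pieces are in place, I obtain for all $\psi$ and $\eta$ the identity
\begin{equation*}
\int\eta(y)\Big[\int_0^{L_y}\widetilde\rho\,\widetilde\psi'\,dt+\int\widetilde g\widetilde\psi\,d(\L^1+\widetilde\mu_y)\Big]dy+\int_{S\setminus E^*} g\psi\,\eta(f)\,dx=0.
\end{equation*}
Choosing $\eta$ concentrated on a null set of $y$'s isolates the last term and forces $g=0$ there. Since $\eta$ is arbitrary, the bracket then vanishes for a.e.\ $y$, for each $\psi$ in a countable dense family. Functions $\widetilde\psi=\psi\circ\gamma$ with $\psi$ ranging over a countable dense subset of $C^1_c$ are rich enough to generate all periodic test functions on $[0,L_y]$, because $\gamma$ is injective on a period. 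This yields \eqref{dive-along-curve-param}, and $\widetilde\rho\in BV_\loc$ follows since the right-hand side is a locally finite measure. For the converse, the same computation is run backwards: for general $\fhi$, the function $\fhi\circ\gamma$ is a Lipschitz periodic test function, and integrating \eqref{dive-along-curve-param} in $y$ recovers the weak formulation.
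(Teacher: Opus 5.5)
Your overall route is the natural one: test with $\psi\,\eta(f)$, use coarea on $\{\nabla f\neq0\}$ and the disintegration on $S\cap E^*$, then localize in $y$. It is what the Alberti--Bianchini--Crippa argument cited by the paper reduces to for monotone $f$. However, the step you yourself single out as the main obstacle is never actually carried out. ``Choosing $\eta$ concentrated on a null set of $y$'s'' presupposes that $f(\R^2\setminus E^*)$ is $\L^1$-negligible, and your inclusion into the union of zero-length level sets and $\{f=0\}$ does not give this.

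The missing observation is topological. For $y\neq0$ the level set $\{f=y\}$ is connected, so if it has zero length it is a single point $p$. Then the connected set $\R^2\setminus\{p\}$ is the disjoint union of the open sets $\{f>y\}$ and $\{f<y\}$, which forces $y\in\{\min f,\max f\}$. Hence $\R^2\setminus E^*\subset\{f=0\}$ up to at most two points. Now take $\eta_k(y)=(1-k|y|)^+$ and call your bracket $B_\psi(y)$. The term $\int\eta_k B_\psi\,dy$ tends to $0$ because $B_\psi\in L^1(dy)$; this is where $f_\#\mu\ll\L^1$ enters. What remains is $\int_{\{f=0\}\setminus E^*}g\psi\,dx=0$ for all $\psi$. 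Your heuristic with $\chi$ ``supported on curves of positive length'' is not an argument as it stands.

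Separately, $\mu_y\ll\1_{\{f=y\}}\H^1$ is false. $\mu_y$ is carried by $S\cap\{f=y\}$, which is $\H^1$-null for a.e. $y$ by coarea, so $\mu_y$ is \emph{singular} to $\H^1$; that is precisely why $\widetilde\mu_y$ appears. What you need instead is that for a.e. $y\neq0$ the connected level set is itself the simple closed curve $\gamma([0,L_y])$.

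The more serious gap is the final step, going from ``the bracket vanishes for all $\psi$'' to the identity in $\mathscr{D}'(\R)$. Injectivity of $\gamma$ only gives, via Stone--Weierstrass, that $\{\psi\circ\gamma\}$ is uniformly dense in $C(\R/L_y\mathbb{Z})$. But $\zeta\mapsto\int\widetilde\rho\,\zeta'\,dt$ is a first-order functional and is not sup-norm continuous unless you already know $\widetilde\rho\in BV$, which is what you are proving. Nor can a general smooth $\zeta$ be written as $\psi\circ\gamma$ with $\psi$ Lipschitz. Indeed $|(\psi\circ\gamma)'|\le\mathrm{Lip}(\psi)\,|\nabla f\circ\gamma|$, and $|\nabla f|$ is not bounded below along the curve. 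In this paper's example it is $O(n^4 2^{-n})$ on the parts of the curve inside $C_n$, which have positive $t$-measure; cusps and near self-contacts cause the same problem.

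So you need a genuine lemma that uses the Jordan-curve structure. One way:
\begin{enumerate}
\item Taking $\psi\equiv1$ near the curve gives $\int\widetilde g\,d(\L^1+\widetilde\mu_y)=0$. Hence $G(t)=\int_{[0,t)}\widetilde g\,d(\L^1+\widetilde\mu_y)$ is periodic.
\item The function $u=\widetilde\rho-G$ then satisfies $\int u\,(\psi\circ\gamma)'\,dt=0$ for all $\psi$. In other words, $\lambda=\gamma_\#(u\gamma'\L^1)$ is a divergence-free measure on $\R^2$.
\item Therefore $\lambda^\perp=Dw$ with $w\in BV$ locally constant off the curve, so $w=a\1_\Omega$ with $\Omega$ the Jordan interior.
\item The winding-number (Green) formula gives $D\1_\Omega=\pm\gamma_\#((\gamma')^\perp\L^1)$.
\item Injectivity of $\gamma$ together with $\gamma'\neq0$ a.e. then yields $u\equiv\pm a$. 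Thus $\widetilde\rho=G+\mathrm{const}\in BV$ with the required derivative.
\end{enumerate}
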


The proof of this result is almost identical to the proof of \cite[Lemma 3.9 and Lemma 4.4]{ABC14}, although it can be considerably simplified.
See also \cite[Proposition 7.2]{GK25} for the particular case when $g=0$ a.e. on $\{\nabla f = 0\}$. 

\begin{lem}\label{atomless-chain-rule}
	Under the assumptions of Proposition~\ref{disint-dive}, if $\mu_y$ has no atoms for a.e. $y$, then
	$\vi$ has the chain rule property (in the sense of Definition~\ref{dfn-chain-rule}).
\end{lem}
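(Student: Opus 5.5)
We prove the non-homogeneous chain rule property of Definition~\ref{dfn-chain-rule}(ii); the homogeneous one is the special case $g=0$. The idea is to use Proposition~\ref{disint-dive} twice: once to disintegrate the equation $\div(\rho\vi)=g$ along the level sets of $f$, and once, in the converse direction, to reassemble the equation for $\beta(\rho)$ from the one-dimensional identities given by Theorem~\ref{chain-rule-BV}.

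Fix $\rho,g\in L^\qq(\R^2)$ with $\div(\rho\vi)=g$, and modify them on null sets so that they are Borel. By Proposition~\ref{disint-dive}, $g=0$ a.e. on $\R^2\setminus E^*$. Moreover, for a.e. $y$ with $\H^1(\{f=y\})>0$ we have $\widetilde\rho=\rho\circ\gamma\in BV_\loc(\R)$ and
\[
D\widetilde\rho=\widetilde g\,(\L^1+\widetilde\mu_y).
\]
By hypothesis $\mu_y$ has no atoms. Since $\gamma$ is injective on a period, $\widetilde\mu_y$ has no atoms either, so $D\widetilde\rho$ is atomless. Applying Theorem~\ref{chain-rule-BV} on bounded intervals gives
\[
D\beta(\widetilde\rho)=\beta'(\widetilde\rho^*)\,\widetilde g\,(\L^1+\widetilde\mu_y),
\]
where $\widetilde\rho^*$ is the precise representative. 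Observe also that $\beta(\rho)\in L^\qq$, because $\beta$ is admissible, and that $\beta(\rho)\circ\gamma=\beta(\widetilde\rho)$.

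The main step is to define $\widehat\rho$. On a.e. level set with positive length we set $\widehat\rho(\gamma(s))=\widetilde\rho^*(s)$. This is well defined thanks to $L_y$-periodicity and the injectivity of $\gamma$ on a period. Elsewhere we set $\widehat\rho=\rho$. We must check three things.
\begin{enumerate}
\item $\widehat\rho$ is measurable. This can be handled by writing $\widehat\rho$ as a $\limsup$ of averages of $\rho$ along the flow, $\lim_{r\to0}\frac1{2r}\int_{-r}^{r}\rho(\gamma(s+t))\,dt$, using the Borel dependence of the parametrizations on $y$; alternatively one can work with any Borel version.
\item $\widehat\rho=\rho$ a.e. on $\{\vi\neq0\}$. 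Since $\widetilde\rho=\widetilde\rho^*$ $\L^1$-a.e., the coarea formula in Subsection~\ref{subsect-coarea} shows that $\widehat\rho=\rho$ for $\L^2$-a.e. point of $\{\nabla f\ne0\}$.
\item The right-hand side is correct. Put $G:=\beta'(\widehat\rho)g$. Then $G\in L^\qq$, $G=0$ a.e. off $E^*$, and $G\circ\gamma=\beta'(\widetilde\rho^*)\widetilde g$.
\end{enumerate}

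Finally, apply the converse direction of Proposition~\ref{disint-dive} to the pair $(\beta(\rho),G)$. All its hypotheses hold: $\beta(\widetilde\rho)\in BV_\loc$, and
\[
D\beta(\widetilde\rho)=(G\circ\gamma)(\L^1+\widetilde\mu_y)
\]
for a.e. $y$. It follows that $\div(\beta(\rho)\vi)=\beta'(\widehat\rho)g$. Since $\widehat\rho$ depends only on $\rho$ and not on $\beta$, this is exactly the required property.

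We expect the main obstacle to be the measurability and well-definedness of $\widehat\rho$, together with the precise meaning of $\beta'(\widehat\rho)g$ on $S\cap E^*$. Note that the density $\widetilde g$ with respect to $\widetilde\mu_y$ must be evaluated at the precise representative $\widetilde\rho^*$, which exists $\widetilde\mu_y$-a.e. because the jump set of $\widetilde\rho$ is empty. To handle the "a.e. $y$" exceptional sets we would use the fact that $\mu=\int\mu_y\,dy$: a set that is null for a.e. $y$ is null for $\mu$ and also for $\1_{\{\nabla f\neq 0\}}\L^2$ by coarea.
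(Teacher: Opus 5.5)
Your proposal is correct and follows essentially the same route as the paper. You disintegrate via Proposition~\ref{disint-dive}, use atomlessness of $\widetilde{\mu}_y$ to get a continuous representative (your $\widetilde{\rho}^*$ is the paper's $\overline{\rho}$), apply Theorem~\ref{chain-rule-BV}, and reassemble with the converse direction of Proposition~\ref{disint-dive}. The only differences are these: you construct $\widehat{\rho}$ by hand and only sketch its measurability, whereas the paper takes it directly from \cite[Claim A.1]{BG16}; and you apply the converse to $\beta(\rho)$ instead of $\beta(\widehat{\rho})$, which is equivalent because the two agree a.e. on $\{\vi\neq 0\}$.
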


\begin{proof}
	By Proposition~\ref{disint-dive} for a.e. $y$ such that $\H^1(\{f=y\})>0$ we have \eqref{dive-along-curve-param}.
	Since the parametrization $\gamma \colon [0,L_y] \to \R^2$ of $\{f=y\}$ is injective on $[0,L_y)$,
	the measure $\widetilde{\mu}_y$ is atomless by the assumptions,
	hence the distribution $D\widetilde{\rho}$ is an atomless measure.
	Then there exists a continuous function $\overline{\rho} \colon [0, L_y] \to \R$ such that $\overline{\rho} = \widetilde{\rho}$ a.e.
	Then by Theorem~\ref{chain-rule-BV} for any admissible $\beta$ we have
	\begin{equation}
		D\beta(\overline{\rho}) = \beta'(\overline{\rho})\widetilde{g}\cdot(\L^1 + \widetilde{\mu}_y).
	\end{equation}
	The above equation holds for a.e. $y$ such that $\H^1(\{f=y\})>0$.
	
	It is known (see e.g. \cite[Claim A.1]{BG16}) that there exists a measurable function $\widehat{\rho} \colon \R^2 \to \R$ such that $\rho = \widehat{\rho}$ a.e. on $\{\vi \ne 0\}$ and for a.e. $y\in \R$ such that $\H^1(\{f=y\})>0$ and $\{f=y\}$ has a Lipschitz parametrization $\gamma\colon [0, L_y] \to \R^2$ we have
	\begin{equation*}
		\widehat{\rho}(\gamma(t)) = \overline{\rho}(\gamma(t)).
	\end{equation*}
	
	Now, using Proposition~\ref{disint-dive} again (but this time in the other direction) we conclude that
	\begin{equation*}
		\div (\beta(\widehat{\rho}) \vi) = \beta'(\widehat{\rho}) g.
		\qedhere
	\end{equation*}
\end{proof}

\section{Construction of the stream function}\label{sect-construction}

\subsection{Construction parameters}

Let $(a_n)_{n \in \N}, (b_n)_{n \in \N}$ be decreasing sequences of positive real numbers such that
\begin{equation}\label{an-bn}
    a_n \sim b_n \sim \frac{1}{n^2 4^n}, \quad n\to\infty.
\end{equation}
Then the following quantities
\begin{equation}
    \hat{a} := \sum_{n=0}^\infty 2^{2n+3} a_n, \quad \hat{b} = \sum_{n=0}^\infty 2^{2n+2} b_n
\end{equation}
are finite. Let $\delta > 0$, define
\begin{equation}
    c_0 := \delta + \hat{a}, \quad d_0 := \delta + \hat{b}.
\end{equation}

\subsection{Rectangles \texorpdfstring{$C_n$}{Cn}}

\begin{figure}[h!]
    \centering
    \includegraphics[width=0.9\linewidth]{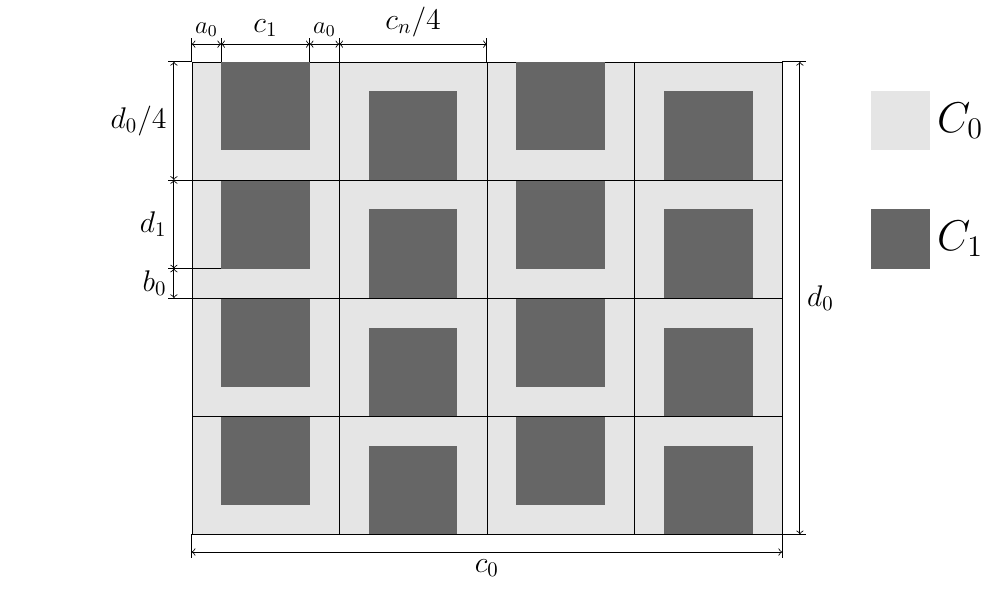}
    \caption{The sets $C_n$ for $n = 0, 1$}
    \label{pic-set}
\end{figure}

Let $C_0$ be closed rectangle with width $c_0$ and height $d_0$. Define $C_1$ to be the union of 16 closed rectangles with sizes
\begin{equation}
    c_1 := \frac{1}{4}c_0 - 2 a_0, \quad d_1 := \frac{1}{4}d_0 - b_0
\end{equation}
like in Figure \ref{pic-set}. 

Then we iterate this construction: if $C_n$ is the union of $16^n$ pairwise disjoint closed rectangles with width $c_n$ and height $d_n$, then $C_{n+1}$ is the union of $16^{n+1}$ disjoint closed rectangles with sizes
\begin{equation}
    c_{n+1} := \frac{1}{4} c_n - 2 a_n, \quad d_{n+1} := \frac{1}{4}d_n - b_n.
\end{equation}
It is easy to see that
\begin{equation}
    4^n c_n = c_0 - \sum_{m=0}^{n-1} 2^{2m+3} a_m \searrow \delta \quad\text{and}\quad 4^n d_n = d_0 - \sum_{m=0}^{n-1} 2^{2m+2} b_m \searrow \delta
\end{equation}
as $n\to \infty$, so $c_n, d_n$ are always strictly positive and satisfy
\begin{equation}\label{cn-dn}
    c_n \sim d_n \sim \frac{\delta}{4^n}.
\end{equation}

Denote $C = \bigcap_{n=1}^\infty C_n$. We have
\begin{equation}
    \L^2(C) = \lim_{n \to \infty} \L^2(C_n) = \lim_{n \to \infty} 16^n c_n d_n = \delta^2 >0.
\end{equation}

\subsection{Construction of the stream function}

Let us construct a sequence of Lipschitz and piecewise smooth functions $f_n: \R^2 \to \R$ as follows.

\begin{figure}
\centering
\begin{subfigure}{0.49\textwidth}
    \includegraphics[width=\textwidth]{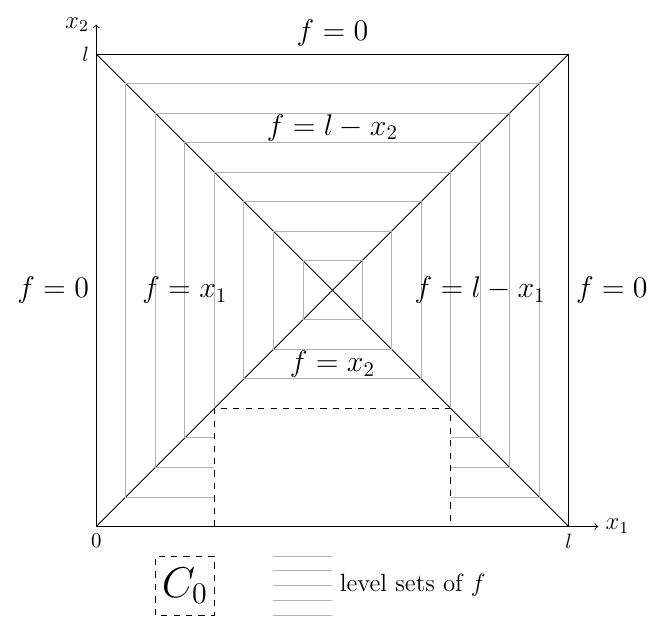}
    \caption{Level sets of $f_0$.}
    \label{pic1a}
\end{subfigure}
\begin{subfigure}{0.49\textwidth}
    \includegraphics[width=\textwidth]{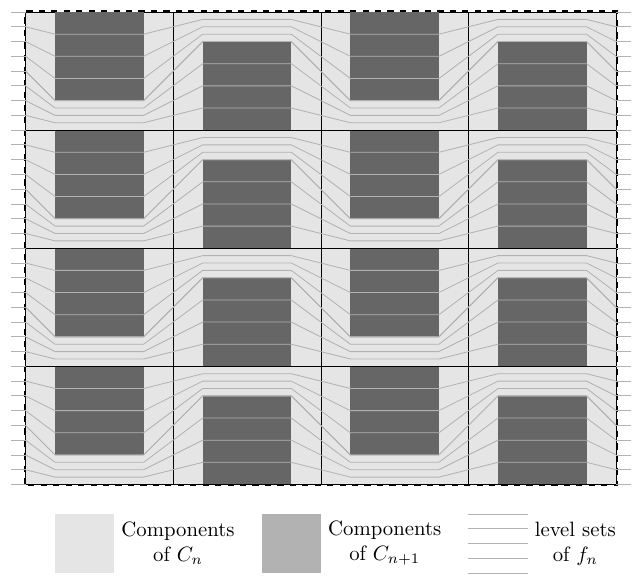}
    \caption{Level sets of $f_n$ in each component of $C_n$.}
    \label{pic1b}
\end{subfigure}
\caption{Level sets of the approximate stream functions $f_n$.}
\label{pic1}
\end{figure}

Denote $l := c_0 + 2d_0$.
Let the function $f_0$ be zero in $\R^2 \setminus [0,l]^2$.
For $(x_1,x_2) \in [0,l]^2 \setminus C_0$ the function $f_0$ is defined by the following formulae (see Figure \ref{pic1a}):
\begin{align*}
    f_0(x_1,x_2) = x_2,& \quad x_2 < x_1 \text{ and } x_2 + x_1 \leq l, \\
    f_0(x_1,x_2) = l - x_1,& \quad x_2 < x_1 \text{ and } x_2 + x_1 > l, \\
    f_0(x_1,x_2) = x_1,& \quad x_1 < x_2 \text{ and } x_1 + x_2 \leq l, \\
    f_0(x_1,x_2) = l - x_2,& \quad x_1 < x_2 \text{ and } x_2 + x_1 > l.
\end{align*}
In $C_0$ the function $f_0$ is defined by its level sets, described in Figure \ref{pic1b}.

The function $f_1$ agrees with $f_0$ in $\R^2 \setminus C_1$, while in each component of $C_1$ it is defined by its level curves, described in Figure \ref{pic1b}. And so on for $n = 2,3,\dots$.

Denote $h_n := f_n - f_{n-1}$ for all $n > 0$.

\begin{lem}
    We have the following estimates:
    \begin{equation}\label{norm-hn-grad-hn}
        \|h_n\|_\infty = O(n^4 8^{-n}), \quad \|\nabla h_n\|_\infty = O(n^4 2^{-n})
    \end{equation}
\end{lem}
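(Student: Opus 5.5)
The plan is to reduce both estimates to a single component $Q$ of $C_n$ (wait: $h_n = f_n - f_{n-1}$ is supported in $C_n$, since $f_n$ and $f_{n-1}$ agree outside $C_n$) and to compare, inside one rectangle $Q$ of $C_{n-1}$ of size $c_{n-1}\times d_{n-1}$, the level-set picture of $f_{n-1}$ (Figure~\ref{pic1b}) with the refined picture of $f_n$. By construction $f_{n-1}$ and $f_n$ agree on $\partial Q$ and on the corridors of $Q\setminus C_n$, of widths $a_{n-1}$ and $b_{n-1}$, so $h_n$ vanishes there. On each of the 16 subrectangles $R\subset Q\cap C_n$, both $f_{n-1}$ and $f_n$ are Lipschitz and take values in the same range $[m_R,M_R]$ dictated by the boundary values on $\partial R$. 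Hence $\|h_n\|_{L^\infty(R)}\le M_R-m_R$, and the first estimate reduces to bounding the oscillation of $f_{n-1}$ on a subrectangle of $C_{n}$.

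For the sup bound, I would follow the gradient scales inductively. In the corridors of $C_{k}\setminus C_{k+1}$ the level lines must squeeze the flux through channels of width $\sim a_k,b_k$, while the oscillation carried across a cell of $C_k$ is some $\omega_k$. The gradient there is then of size $\omega_k/a_k$ (up to constants). Reading Figure~\ref{pic1b}, each refinement splits the oscillation of a cell among the $4\times4$ subcells, giving roughly $\omega_{k+1}\approx \omega_k/8$ (the factor $8$ being $4$ from dividing the side times $2$ from the two-sided way the level curves pass). This recursion yields $\omega_n=O(8^{-n})$. The loss $n^4$ would then come from the polynomial factors $a_n^{-1}4^{-n}\sim n^2$ and $b_n^{-1}4^{-n}\sim n^2$ in \eqref{an-bn}, which enter when comparing the gradient in corridors with the cell size. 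Concretely, I expect $\|f_{n-1}\|_{\mathrm{osc}(R)}\lesssim \|\nabla f_{n-1}\|_{L^\infty(R)}\,\mathrm{diam}(R)$, with $\mathrm{diam}(R)\sim 4^{-n}$ by \eqref{cn-dn}. So the first bound follows from the second once I prove $\|\nabla f_{n-1}\|_\infty = O(n^42^{-n})$ on the cells of $C_n$.

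For the gradient bound, I would estimate $|\nabla f_n|$ on $C_n\setminus C_{n+1}$ directly. There the level lines are (piecewise) straight segments spaced according to the oscillation $\omega_n$ divided over a corridor width $a_n$ or $b_n$, and the construction redistributes flux so that the spacing scales like the oscillation over the width. This gives $|\nabla f_n|\lesssim \omega_n/\min(a_n,b_n)\sim 8^{-n}n^24^n = n^2 2^{-n}$. Any extra $n^2$ arises from the accumulated ratio between the actual oscillation recursion and the ideal factor $1/8$, which is of order $\prod_k(1+O(k^{-2}))$ or from $c_n$ versus $\delta4^{-n}$. Finally $\|\nabla h_n\|_\infty\le \|\nabla f_n\|_{L^\infty(C_n)}+\|\nabla f_{n-1}\|_{L^\infty(C_n)}$ gives the second estimate of \eqref{norm-hn-grad-hn}.

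The main obstacle is making the oscillation recursion $\omega_{n+1}\le \omega_n/8$ (up to $1+O(n^{-2})$ factors) precise from the figure-defined level sets. I would try first to write $f_n$ explicitly as an affine function in each trapezoidal piece of Figure~\ref{pic1b}, indexed by cell. I would then verify by direct computation that the values on the boundary of each subrectangle span exactly an interval of length $\omega_n/8$ (or comparable), and that the slopes equal that length divided by the corridor widths. The exponent $4$ in $n^4$ leaves slack, so crude constants suffice.
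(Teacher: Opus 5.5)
There is a genuine gap, and it is in the gradient estimate. (Your sup-norm argument is fine. Your observation $|h_n|\le M_R-m_R$ already gives $\|h_n\|_\infty\le s_n=8^{-n}d_0$, because on each cell of $C_n$ the function $f_{n-1}$ is affine in $x_2$ with oscillation $s_{n-1}/8=s_n$, and $f_n$ has the same range there.)

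You claim that on $C_n\setminus C_{n+1}$ one has $|\nabla f_n|\lesssim \omega_n/\min(a_n,b_n)\sim n^2 2^{-n}$, on the grounds that level lines are ``spaced like the oscillation over the corridor width''. You then attribute the missing $n^2$ to $\prod_k(1+O(k^{-2}))$ or to $c_n$ versus $\delta 4^{-n}$. Both of those are bounded factors, and the recursion $s_{n+1}=s_n/8$ is exact, so neither can produce $n^2$. More importantly, the claimed bound is false. The problem is the transition trapezoid $E$ of Figure~\ref{pic2a}. There the level lines leaving the thin corridor $F$ must spread over height $\sim d_n$ within horizontal distance $a_n$. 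In $E$ the level lines are not parallel: $f_n=t+\frac{a_n s_n x_2}{a_nd_n+(d_n-8b_n)x_1}$ is a fan of rays, not an affine function. So your plan to write $f_n$ as affine on each trapezoidal piece breaks down exactly there. The rays are also steeply inclined, with slope up to $(d_n-8b_n)/a_n\sim\delta n^2$. Their perpendicular spacing is therefore smaller than their vertical spacing by a factor $\sim n^{-2}$, and
\[
|\partial_1 f_n|=\frac{(d_n-8b_n)\,|f_n-t|}{a_nd_n+(d_n-8b_n)x_1}.
\]
Near $x_1=-a_n$ the denominator equals $8a_nb_n$, so this quantity is of order $\frac{(d_n-8b_n)s_n}{8a_nb_n}\sim n^4 2^{-n}$.

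So the exponent $4$ is not slack. It is the true size of $\nabla f_n$ in $E$: the product of the corridor gradient $s_n/(8b_n)\sim n^2 2^{-n}$ with the fan factor $(d_n-8b_n)/a_n\sim n^2$. To close the gap you need the explicit computation on $E$ that the paper does. Write $f_n$ there, and use $a_nd_n+(d_n-8b_n)x_1\ge 8a_nb_n$ for $x_1\ge -a_n$ together with $|f_n-t|\le s_n$. This gives $|\nabla f_n|\le \frac{s_n}{8b_n}\bigl(\frac{d_n-8b_n}{a_n}+1\bigr)$ on $E$. You should also record the easy bounds $s_n/(8b_n)$ on $F$ and $s_n/(8d_{n+1})$ on the cells of $C_{n+1}$; you omit the latter. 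With these in hand, your final step $\|\nabla h_n\|_\infty\le\|\nabla f_n\|_{L^\infty(C_n)}+\|\nabla f_{n-1}\|_{L^\infty(C_n)}$ goes through as in the paper.
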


\begin{proof}

\begin{figure}
\centering
\begin{subfigure}[c]{0.49\textwidth}
    \includegraphics[width=1\textwidth]{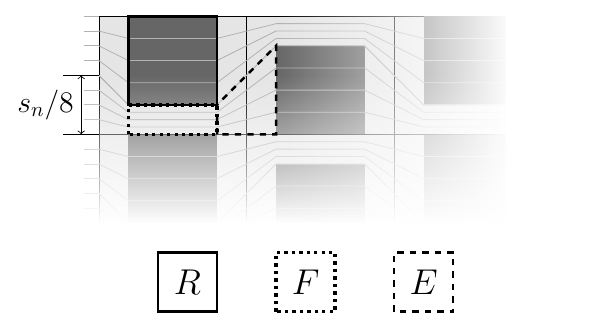}
    \centering
    \caption{Sets $F$ and $E$.}
    \label{pic2a}
\end{subfigure}
\begin{subfigure}[c]{0.49\textwidth}
    \includegraphics[width=1\textwidth]{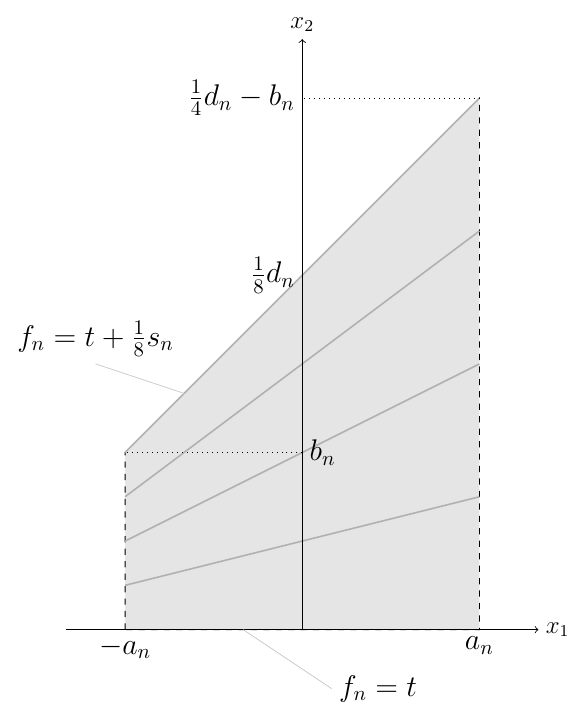}
    \caption{Level sets of $f_n$ in the set $E$.}
    \label{pic2b}
\end{subfigure}
\caption{The level sets of $f_n$ in each connected component of $C_n$.}
\label{pic2}
\end{figure}
    
Let us write the explicit formulae of $f_n$.

Denote oscillation of a function $f$ on a set $E$ as $\osc(f, E) := \sup\limits_{x \in E} f(x) - \inf\limits_{x \in E} f(x)$.

For any connected component $Q$ of $C_n$ let $s_n := \osc(f_n,Q)$ (by symmetry $s_n$ does not depend on the choice of $Q$).
It is easy to see from the Figure \ref{pic1b} that
\begin{equation}\label{sn}
s_{n+1} = \frac{s_n}{8}, \quad s_n = 8^{-n} s_0 = 8^{-n} d_0.
\end{equation}

Let us estimate $L^\infty$-norm of $\nabla f_n$ on $C_n$.
In order to do this, we will estimate $\nabla f_n$ separately on any connected component $R$ of $C_{n+1}$,
as well as on the sets $E$ and $F$ introduced on Figure~\ref{pic2a}.

If $R$ is a connected component of $C_{n+1}$ then for all $x\in R$ the function $f_n(x)$ does not depend on $x_1$ and is a linear function of $x_2$. So $|\nabla f| = |(0, \frac{\osc(f_n,R)}{\operatorname{height}(R)})| = \frac{s_n}{8 d_{n+1}}$ in $R$.

It is easy to see from Figure \ref{pic2a} that $|\nabla f_n| = \frac{s_n}{8b_n}$ in $F$.

If we choose the axes as in Figure \ref{pic2b} and denote by $t$ the value of $f_n$ at the bottom of $E$, then we readily check that for every $x \in E$, $f_n(x)$ is given by
\begin{equation}
    f_n(x) = t + \frac{a_n s_n x_2}{a_n d_n + (d_n - 8 b_n) x_1}
\end{equation}
and direct calculations yields
\begin{equation}\label{norm-fn}
    \nabla f_n = \frac{(-(d_n - 8 b_n) (f_n(x) - t), a_n s_n)}{a_n d_n + (d_n - 8 b_n) x_1}.
\end{equation}
Due to \eqref{an-bn} and \eqref{cn-dn} we have $d_n - 8 b_n \sim \delta \cdot 4^{-n}$. In particular, $d_n - 8 b_n > 0$ for $n$ sufficiently large and due to $x_1 \geq -a_n$ we have
\begin{equation}
    a_n d_n + (d_n - 8 b_n) x_1 \geq 8 a_n b_n.
\end{equation}
Since $|f_n - t| \leq s_n$ we have $|-(d_n - 8 b_n) (f_n(x) - t)| \leq (d_n - 8 b_n) s_n$. Then
\begin{equation}\label{norm-grad-fn}
    |\nabla f_n| \leq \frac{(d_n - 8 b_n) s_n + a_n s_n}{8 a_n b_n} = \frac{s_n}{8 b_n}\left(\frac{d_n - 8 b_n}{a_n} + 1\right) \text{ \ in \ } E.
\end{equation}

Combining the above estimates, we conclude that
\begin{equation*}
	\|\nabla f_n\|_{L^\infty(C_n)} = O(n^4 2^{-n}), \ \ n\to \infty.
\end{equation*}
By construction $f_n = f_{n-1}$ in $\R^2 \setminus C_n$ therefore $\supp h_n \subset C_n$ and
\begin{equation}
    \|\nabla h_n\|_\infty \leq \|\nabla f_n \|_{L^\infty(C_n)} + \|\nabla f_{n-1}\|_{L^\infty(C_{n})} \leq \|\nabla f_n \|_{L^\infty(C_n)} + \|\nabla f_{n-1}\|_{L^\infty(C_{n-1})} = O(n^4 2^{-n}),
\end{equation}
and since distance of a point in $C_n$ from $\R^2 \setminus C_n$ is of order $c_n = O(4^{-n})$ by the Mean Value Theorem we have
\begin{equation}
    \|h_n\|_\infty = O(n^4 8^{-n}).
\end{equation}
This completes the proof of the Lemma.
\end{proof}

For every $x \in \R^2$ define
\begin{equation}\label{defn-of-f}
    f(x) := \lim_{n \to \infty} f_n(x) = f_0 + \sum\limits_{n=1}^\infty h_n(x).
\end{equation}

Note that $f_n$ uniformly converges to $f$.

\subsection{Properties of the stream function}

Due to norm estimates we have $\sum_{n=1}^\infty \|h_n\|_{W^{1,\infty}} < \infty$, then $f$ is well-defined Lipschitz function on $\R^2$, and by construction it has compact support.

Let us show other properties of the function $f$.

\begin{thm}\label{thm-prop-of-f}
    We have
    \begin{enumerate}[label=\emph{(\roman*)}]
        \item $C$ is the critical set of $f$ in the square $[0,l]^2$ (up to $\L^2$-negligible subsets),
        \item $\L^1(f(C)) = d_0$,
        \item $f_\#(\1_C \cdot \L^2) = \frac{\delta^2}{d_0} \1_{f(C)} \cdot \L^1$. In particular $f$ does not have weak Sard property.
    \end{enumerate}
\end{thm}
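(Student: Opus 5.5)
The plan is to prove (i)--(iii) by passing information from the approximations $f_n$ to $f$, working one generation of rectangles at a time and using the uniform bounds \eqref{norm-hn-grad-hn}.

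\emph{Step (i).} First I will show that $\nabla f\neq0$ a.e. on $[0,l]^2\setminus C$. For every $m$, the set $[0,l]^2\setminus C_m$ satisfies $f=f_m$ there, because $h_n$ vanishes outside $C_n\subset C_m$ for $n>m$. The piecewise formulas (the triangles of $f_0$, the strips $F$, the sets $E$ via \eqref{norm-fn}, and the rectangles of $C_{n+1}$ where $\nabla f_n=(0,s_n/(8d_{n+1}))$) show that $\nabla f_n\ne0$ a.e. on $C_n\setminus C_{n+1}$. The only exception is a null set of corner or diagonal lines.

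Next I will show that $\nabla f=0$ a.e. on $C$. On $C_n$ we have $f=f_n+\sum_{k>n}h_k$. By \eqref{norm-hn-grad-hn}, $\|\nabla(f-f_n)\|_\infty=O(n^4 2^{-n})\to0$. By the same estimates as in the Lemma, $\|\nabla f_n\|_{L^\infty(C_{n+1})}=s_n/(8d_{n+1})=O(2^{-n})\to0$. Hence $|\nabla f|\le \|\nabla f_n\|_{L^\infty(C_{n+1})}+\|\nabla(f-f_n)\|_\infty\to0$ a.e. on $C\subset C_{n+1}$. Here I would use the fact that $f-f_n$ is Lipschitz with the stated constant, so its a.e. gradient is bounded by that constant.

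\emph{Step (ii).} On a component $Q$ of $C_n$, the values of $f$ on $Q$ lie in $f_n(Q)$, which is an interval of length $s_n$. This holds because $h_k$ for $k>n$ only modifies $f$ inside subrectangles, and within each subrectangle of generation $n+1$ the range stays inside the range of $f_n$ on it, as Figure~\ref{pic1b} shows. The 16 subrectangles of $Q$ have ranges given by 8 pairwise disjoint consecutive intervals of length $s_n/8$ (two subrectangles per interval). So $f(C_n)$ is a union of $8^n$ disjoint intervals of length $s_n$, giving $\L^1(f(C_n))=8^n s_n=d_0$. Since $f(C)=\bigcap f(C_n)$ by compactness, this yields the measure $d_0$. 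I expect the main obstacle to be verifying that these 8 intervals tile $f_n(Q)$ exactly, with no gaps, so that the measure is not lost. That requires reading off the level-set structure in Figure~\ref{pic1b} and using the identity \eqref{sn}.

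\emph{Step (iii).} I will compute the measure of the preimage of each generation-$n$ interval $I$ of length $s_n$. The preimage $f^{-1}(I)\cap C$ is $C\cap(Q\cup Q')$ for the two or more components mapped onto $I$. By self-similarity each component carries $C$-mass $\delta^2/16^n$, and the number of components per interval is $16^n/8^n=2^n$. Hence $f_\#(\1_C\L^2)(I)=2^n\delta^2 16^{-n}=\delta^2 8^{-n}=\frac{\delta^2}{d_0}\L^1(I)$. These intervals generate the Borel $\sigma$-algebra on $f(C)$, so the two measures agree.

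Finally, $C\subset E^*$ up to a null set, since a.e. level set of the monotone $f$ is a single rectifiable curve of positive length, presumably by construction. Together with (i), this gives $f_\#(\1_{S\cap E^*}\L^2)\not\perp\L^1$, so $f$ does not have the weak Sard property.
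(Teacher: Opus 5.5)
Your proposal is correct. It follows the paper's overall strategy: transfer information from $f_n$ to $f$ using \eqref{norm-hn-grad-hn} and the combinatorics of the rectangles. Two steps take a different route.

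In (i), the paper shows that the Lipschitz constant of $f$ near each $x\in C$ is $O(n^4 2^{-n})$. This gives differentiability with $\nabla f(x)=0$ at \emph{every} point of $C$, but the paper never checks the converse inclusion. You only get $\nabla f=0$ a.e.\ on $C$, via $|\nabla f|\le \|\nabla f_n\|_{L^\infty(C_{n+1})}+\sum_{k>n}\|\nabla h_k\|_\infty$. That suffices for a statement ``up to $\L^2$-negligible sets''. In addition, you verify $\nabla f\neq 0$ a.e.\ on $[0,l]^2\setminus C$, using $f=f_n$ on the open set $\R^2\setminus C_{n+1}$ and the explicit formulas on $E$, $F$ and outside $C_0$. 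This is genuinely needed for ``$C$ is the critical set''.

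In (iii), the paper asserts that already $(f_n)_\#(\1_C\L^2)$ is the uniform measure, and passes to the limit by uniform convergence $f_n\to f$. You instead evaluate $f_\#(\1_C\L^2)$ directly on the nested intervals of length $s_n$. You use the same counting the paper uses in Proposition~\ref{atomless}: $2^n$ components of $C$-mass $16^{-n}\delta^2$ per interval. Your route needs no limiting argument and no exact knowledge of $(f_n)_\#(\1_C\L^2)$. The clean identity at level $n$ is really the one for $\1_{C_{n+1}}\L^2$, with density $\L^2(C_{n+1})/d_0$, since $f_n$ is affine in $x_2$ on each component of $C_{n+1}$. In exchange, you must check two things:
\begin{enumerate}
\item Shared endpoints carry no mass: $C\cap f^{-1}(y)$ lies in at most $2^{m+1}$ components of $C_m$ for every $m$.
\item Uniqueness on a generating $\pi$-system then identifies the two measures.
\end{enumerate}

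A few loose ends remain.
\begin{enumerate}
\item In (ii) you only justify $f(Q)\subseteq f_n(Q)$. The lower bound $\L^1(f(C))\ge d_0$ is easiest from your own (iii): $f_\#(\1_C\L^2)=\frac{\delta^2}{d_0}\1_{[0,d_0]}\L^1$ is concentrated on the compact set $f(C)$.
\item The intervals of one generation share endpoints, so they are not pairwise disjoint.
\item $\L^2(C\cap Q)=16^{-n}\delta^2$ does not come from exact self-similarity, since $a_n,b_n$ vary. It follows because $Q$ contains $16^{k-n}$ rectangles of size $c_k\times d_k$ for each $k\ge n$.
\item The step $C\subset E^*$, which you leave as ``presumably by construction'', is exactly Proposition~\ref{prop-level-sets}. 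The paper proves it after this theorem and relies on it only implicitly here. Invoke or reprove it rather than assume monotonicity of $f$.
\end{enumerate}
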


\begin{proof}
    (i) For every $n \geq 0$ we have $f = f_n + \sum_{k=n+1}^\infty h_k$. Since $\nabla f_n$ and $\nabla h_k$ have pairwise disjoint supports, then estimates \eqref{norm-grad-fn} and \eqref{norm-hn-grad-hn} yield
    \begin{equation}
        \|\nabla f\|_{L^\infty(C_n)} = O(n^4 2^{-n}).
    \end{equation}
    So the Lipschitz constant of $f$ on each component of $C_n$ is of order $O(n^4 2^{-n})$. Since $C$ is contained in the interior of $C_n$, it follows that
    \begin{equation}
        \operatorname{lim sup}\limits_{y \to x} \frac{|f(y) - f(x)|}{|x - y|} = O(n^4 2^{-n})
    \end{equation}
    for every $x \in C$, and taking the limit as $n \to \infty$ we obtain $f$ is differentiable at $x$ and $\nabla f(x) = 0$.

    (ii) By construction we have $f(C) = f_n(C_n) = [0,d_0]$. Therefore
    \begin{equation}
        \L^1(f(C)) = d_0.
    \end{equation}

    (iii) For a function $g(x,y)=y$ we have $g_\#(\1_{[0,1]^2} \L^2) = \1_{[0,1]} \L^1$. Therefore by construction of the functions $f_n$ we have
    \begin{equation}
        (f_n)_\#(\1_C \L^2) = \frac{\delta^2}{d_0} \1_{f(C)} \L^1.
    \end{equation}
    Since $f_n$ uniformly converges to $f$, then $(f_n)_\#(\1_C \L^2)$ weakly-$*$ converges to $f_\#(\1_C \L^2)$, which yields that $f_\#(\1_C \L^2) = \frac{\delta^2}{d_0} \1_{f(C)} \L^1$.  
\end{proof}

\begin{prop}\label{prop-level-sets}
    For any $y\in(0,l/2)$ the level set $\{f=y\}$ is a closed simple Lipschitz curve.
    Furthermore, $E^* = (0,l)^2 \setminus \{(l/2,l/2)\}$.
\end{prop}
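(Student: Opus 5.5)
The plan is to use the nested structure $f_n \to f$ uniformly and the explicit level-set pictures (Figures~\ref{pic1a}, \ref{pic1b}, \ref{pic2b}) to describe each level set $\{f=y\}$ as a limit of level sets $\{f_n=y\}$. The proof has three steps.

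\textbf{Step 1: level sets of $f_n$.} First I would verify, by induction on $n$, that for every $y\in(0,l/2)$ the level set $\{f_n=y\}$ is a closed simple polygonal (piecewise smooth) curve.

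For $n=0$ this is read off from Figure~\ref{pic1a}. Outside $C_0$ the level sets are concentric square boundaries. Those passing through $C_0$ are completed inside $C_0$ by the curve from Figure~\ref{pic1b}, and that curve enters and exits through the two points where the outer square meets $\d C_0$.

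For the inductive step I would use that $f_{n+1}=f_n$ off $C_{n+1}$. Inside each component $R$ of $C_{n+1}$, the function $f_n$ is affine in $x_2$ only, so $\{f_n=y\}\cap R$ is a horizontal segment. The replacement by the pattern of Figure~\ref{pic1b} keeps the same entry and exit points on $\d R$ and substitutes a single simple arc between them. Hence each modification is a surgery replacing one arc of a simple closed curve by another simple arc inside a rectangle that the rest of the curve avoids, and simplicity and closedness are preserved.

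\textbf{Step 2: passage to the limit.} Next I would parametrize the curves so that they converge. The modifications of $\{f_n=y\}$ at stage $n$ occur inside rectangles of diameter $O(4^{-n})$. I would therefore choose parametrizations $\gamma_n$ of $\{f_n=y\}$ with $\|\gamma_{n+1}-\gamma_n\|_\infty=O(4^{-n})$, reparametrizing only the replaced arcs. This gives uniform convergence $\gamma_n\to\gamma$.

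To show $\gamma$ is Lipschitz, I would bound the lengths of the level curves. The arc inside a component of $C_n$ has length comparable to its entry segment, up to a factor $1+O(b_n/d_n+a_n/c_n)=1+O(n^{-2})$. The product $\prod(1+O(n^{-2}))$ converges, so the lengths $\H^1(\{f_n=y\})$ stay bounded. Constant-speed parametrizations then have uniformly bounded Lipschitz constants, and the limit $\gamma$ is Lipschitz.

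I would then show $\gamma([0,1])=\{f=y\}$ using uniform convergence $f_n\to f$. One inclusion is immediate from uniform convergence of both $f_n$ and $\gamma_n$. For the other inclusion, take $x$ with $f(x)=y$. If $x\notin C$, then $f=f_n$ near $x$ for large $n$, so $x$ lies on $\{f_n=y\}$. If $x\in C$, then $x$ lies in a nested sequence of components $Q_n\ni x$, and the curve $\{f_n=y\}$ meets each $Q_n$ because $f_n(Q_n)\ni y$. It follows that $x$ is a limit of points of $\gamma_n$.

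\textbf{Step 3: injectivity and $E^*$ (main obstacle).} The main obstacle is proving that $\gamma$ is injective. Distinct points $s\ne t$ are separated at some finite stage $n$: the points $\gamma_n(s)$ and $\gamma_n(t)$ lie either in different components of $C_n$ or on disjoint portions of the curve outside $C_n$. I expect injectivity to follow from showing that later modifications stay inside those disjoint components. Here I would use that the rectangles of $C_{n+1}$ are separated by gaps of width $\ge a_n,b_n>0$. This makes the curve pieces inside distinct components of $C_{n+1}$ lie in disjoint compact sets, and the separation persists in the limit.

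Finally, for $E^*$, I would argue as follows:
\begin{itemize}
\item For each $y\in(0,l/2)$ the level set is a single curve of positive length, so it lies in $E^*$.
\item The union of these level sets over $y\in(0,l/2)$ is $(0,l)^2\setminus\{(l/2,l/2)\}$, since $f$ takes values in $(0,l/2)$ there and attains its maximum $l/2$ only at the center.
\item The level set $\{f=l/2\}$ is the single point $(l/2,l/2)$, which has $\H^1$-measure zero.
\item The level set $\{f=0\}$ contains $\R^2\setminus(0,l)^2$, which is not part of $E^*$ in the intended sense. I would handle this via the convention that $y\ne0$, or check that this component is excluded by the paper's setup.
\end{itemize}
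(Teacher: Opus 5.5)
\textbf{Gap in Step 2.} Your length bound rests on a false geometric claim, and the Lipschitz conclusion falls with it.

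\emph{Why the factor is wrong.} You assert that the new arc inside a component $Q$ of $C_n$ is longer than the horizontal segment it replaces by a factor $1+O(b_n/d_n+a_n/c_n)$. In fact a value in the range of $Q$ is attained in a sub-rectangle of height $d_{n+1}$ in one column and in a gap of height $b_n$ in the neighbouring column. The two positions are bottom-aligned and carry the same range of $f_n$. The explicit formula for $f_n$ in $E$ confirms this: it gives $f_n=t+s_nx_2/(8b_n)$ at $x_1=-a_n$ and $f_n=t+s_nx_2/(8d_{n+1})$ at $x_1=a_n$. So across a strip of width $2a_n$ the level line rises by up to $d_{n+1}-b_n=\tfrac14 d_n-2b_n$. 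These are exactly the paper's ``inclined pieces'' of length $\sqrt{(\tfrac14 d_n-2b_n)^2+(2a_n)^2}$, and they are nearly vertical. Since $c_n\sim d_n$, the arc in $Q$ has length $c_n+\Theta(d_n)$ for typical $y$. The local factor is therefore $1+\Theta(1)$, not $1+O(n^{-2})$, and $\prod(1+\Theta(1))$ diverges.

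\emph{Consequences for your parametrizations.} Suppose you parametrize each new arc proportionally to its length over the old parameter interval, with the parametrization frozen off the replaced arcs. Then the average speed on nested parameter intervals is multiplied at each stage by a factor that is not close to $1$, so it is unbounded and the limit is not Lipschitz. The constant-speed parametrizations you invoke for the Lipschitz bound are a different object: they are not frozen off the replaced arcs. Your convergence and injectivity arguments do not apply to them as written.

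\textbf{How to repair it.} The missing idea is to count length additively rather than multiplicatively.
\begin{enumerate}
\item By induction, $\{f_{n-1}=y\}$ crosses only $O(2^n)$ components of $C_n$, since each value is taken in exactly two of the sixteen sub-rectangles. This is the same count the paper uses to show that $\mu_y$ is atomless.
\item Each replacement adds $O(d_n)$ to the length. Hence $\sup_n\H^1(\{f_n=y\})\le \H^1(\{f_0=y\})+K\sum_n 2^n d_n<\infty$.
\item Your frozen scheme then gives a continuous injective $\gamma$ with image $\{f=y\}$; your nested-component separation argument is fine for this.
\item Length is lower semicontinuous under pointwise convergence, so $\gamma$ is rectifiable. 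Its arc-length reparametrization is the injective Lipschitz parametrization you need.
\end{enumerate}

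\textbf{Comparison with the paper.} The paper avoids your Step 3 altogether. It observes that $\{f_n=y\}\cap C_0$ is the graph of a piecewise linear function $\varphi_n(x_1)$ with $\|\varphi_n-\varphi_{n+k}\|_\infty\le d_n$. So $\{f=y\}\cap C_0$ is the graph of the uniform limit $\varphi$, which is automatically a simple arc. The graph form also makes the length bound transparent: the horizontal part contributes at most $c_0$, and stage $n$ adds $O(2^n d_n)$ of vertical variation. A lemma of Falconer then supplies the Lipschitz parametrization. Your handling of $\{f=0\}$ matches the paper, which discards that level set on the grounds that $\H^1(\{f=0\})=+\infty$.
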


\begin{proof}
For any $y\in (0,l/2)$ the set $\{f=y\} \cap (\R^2 \setminus C_0)$ is simple piecewise linear curve.

For every $n$ the set $\{f_n=y\} \cap C_0$ is the graph of some piecewise linear function $\varphi_n(x_1)$.
By construction $f_n$ agrees with $f_{n+k}$ on $\R^2 \setminus C_n$ so $\|\varphi_n - \varphi_{n+k}\|_\infty \leq d_n$.
Since $d_n \to 0$ as $n \to \infty$, $\varphi_n$ uniformly converges to some continuous function $\varphi$ whose graph coincides with $\{f=y\} \cap C_0$.

Let us estimate $\H^1(\{f_n = y\} \cap C_0)$. Note that $\{f_n = y\} \cap C_0$ consists of $4^n$ horizontal pieces and $4^n$ inclined pieces (see Figure \ref{pic2}). Length of the horizontal ones could be estimated by $c_0$, and length of the inclined piece could be estimated by $\sqrt{(\frac{1}{4} d_n - 2 b_n)^2 + (2 a_n)^2} \sim \frac{\delta}{4^n}$. So there is some constant $M$ such that $\H^1(\{f_n = y\} \cap C_0) \leq M$ for all $n$. Since $\varphi_n$ uniformly converges to $\varphi$, then
\begin{equation}
    \H^1(\{f = y\} \cap C_0) \leq \liminf_{n \to \infty} \H^1(\{f_n = y\} \cap C_0).
\end{equation}
By e.g. \cite[Lemma 3.2]{Fal85} the curve $\{f = y\} \cap C_0$ has Lipschitz parametrization.

Therefore $\{f=y\}$ is concatenation of two simple Lipschitz curves. 
Thus for any $x\in f^{-1}((0,l/2))$ the corresponding level set $\{f=f(x)\}$ is a closed simple Lipschitz curve with strictly positive finite length. 
Hence $f^{-1}((0,l/2)) \subset E^*$.
It remains to note that $f^{-1}(l/2) = \{(l/2,l/2)\}$ has zero length, and $\H^1(\{f=0\}) = +\infty$.
\end{proof}

\goodbreak

\section{Chain rule property}\label{sect-proof}

Let $f$ denote the stream function constructed in the previous section.
Let $\mu = \1_{C} \L^2$.
By Theorem~\ref{thm-prop-of-f} we have $f_\# \mu \ll \L^1$.
Let $\{\mu_y\}_{y\in \R}$ denote the disintegration of $\mu$ with respect to $f$ (and $\L^1$).
By Proposition~\ref{prop-level-sets} the interior of $C_0$ is a subset of $E^*$, then $\{\nabla f = 0\} \cap E^* = C$.
Thus in order to show that $\vi := \nablap f$ has the non-homogeneous chain rule property, in view of Lemma~\ref{atomless-chain-rule} it remains to prove the following:

\begin{prop}\label{atomless}
	For a.e. $y$ the measure $\mu_y$ has no atoms.
\end{prop}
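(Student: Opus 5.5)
We argue by exploiting the self-similar structure of $C$ together with the explicit form of $f_n$ on each component of $C_{n+1}$. Fix $n$. Each component $R$ of $C_{n+1}$ lies inside some component of $C_n$, where $f_n$ does not depend on $x_1$ and is linear in $x_2$. Hence, by the same computation as in Theorem~\ref{thm-prop-of-f}(iii), the measure $(f_n)_\#(\1_{C\cap R}\L^2)$ is a constant multiple of Lebesgue measure on the interval $f_n(R)$, which has length $s_{n+1}=8^{-(n+1)}d_0$. Since $f$ and $f_{n+1}$ agree on $\partial R$ and $f(R)=f_{n+1}(R)$, we also get that $f_\#(\1_{C\cap R}\L^2)=\kappa_{n}\1_{f(R)}\L^1$ with $\kappa_n = \L^2(C\cap R)/s_{n+1}$. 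By the self-similarity, $\L^2(C\cap R)=\delta^2/16^{n+1}$, so $\kappa_n=\frac{\delta^2}{d_0}2^{-(n+1)}$.

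Next we disintegrate each such piece separately. By uniqueness of the disintegration, for a.e.\ $y$ we have
\[
\mu_y=\sum_{R}\mu_y\llcorner R ,
\]
where $R$ runs over the components of $C_{n+1}$, and $\mu_y(R)=\kappa_n\1_{f(R)}(y)$ for a.e.\ $y$. We apply this for all $n$ simultaneously, discarding a countable union of null sets. For such $y$ and every $n$, each component $R$ of $C_{n+1}$ then satisfies $\mu_y(R)\le \frac{\delta^2}{d_0}2^{-(n+1)}$.

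Now suppose $\mu_y$ had an atom at some point $x$. Since $\mu_y$ is concentrated on $C=\bigcap_n C_n$ (as $\mu$ is), we have $x\in C$. For every $n$ the point $x$ lies in some component $R_n$ of $C_{n+1}$, so
\[
\mu_y(\{x\})\le \mu_y(R_n)\le \frac{\delta^2}{d_0}2^{-(n+1)}\to 0 .
\]
Hence $\mu_y(\{x\})=0$, a contradiction.

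The main obstacle is justifying the identity $\mu_y(R)=\kappa_n\1_{f(R)}(y)$. One must check that $f_\#(\1_{C\cap R}\L^2)$ is exactly uniform on $f(R)$. We plan to repeat the weak-$*$ limit argument of Theorem~\ref{thm-prop-of-f}(iii) localized to $R$: for every $m\ge n$, the measure $(f_m)_\#(\1_{C\cap R}\L^2)$ is uniform on $f(R)$, because the construction inside $R$ is a rescaled copy of the construction in $C_0$ and $f_m$ maps it measure-proportionally onto $f(R)$. We then pass to the limit $m\to\infty$ using the uniform convergence $f_m\to f$. Once this is established, the identity follows from $\mu_y(R)=\frac{d}{dy}f_\#(\1_{R}\mu)$, i.e.\ from the uniqueness part of the disintegration theorem.
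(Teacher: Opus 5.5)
Your overall strategy matches the paper's. For a.e.\ $y$ and every component $R$ of every $C_k$, you identify $\mu_y(R)=\frac{\L^2(C\cap R)}{\L^1(f(R))}\1_{f(R)}(y)=O(2^{-k})$, and then trap a putative atom $x\in C$ in the nested components containing it. Reading $y\mapsto\mu_y(R)$ off as the density of $f_\#(\1_R\mu)$, instead of through Lebesgue points as the paper does, is fine. The bookkeeping is also correct: countably many null sets, $\mu_y$ concentrated on $C$, and the final estimate. The gap is in your proof of the key identity $f_\#(\1_{C\cap R}\L^2)=\kappa_n\1_{f(R)}\L^1$.

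\textbf{The failing step.} Your argument rests on the claim that $(f_m)_\#(\1_{C\cap R}\L^2)$ is uniform on $f(R)$ for every $m\ge n$, and this is false.
\begin{itemize}
\item Take $m=n$, as in your first paragraph. On $R$ the function $f_n$ is affine in $x_2$ alone, so this measure is just the rescaled $x_2$-marginal of $C\cap R$.
\item That marginal is not constant. For large $n$ at most $4$ of the $16$ components of $C_{n+2}$ inside $R$ can meet a given horizontal line, since $5c_{n+2}>c_{n+1}$. Their heights add up to $16d_{n+2}=4d_{n+1}-16b_{n+1}<4d_{n+1}$.
\item Hence on a set of heights of positive measure a horizontal line meets at most $3$ of them, and there the slice of $C\cap R$ has length at most $3c_{n+2}$. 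The mean slice length $\L^2(C\cap R)/d_{n+1}$, by contrast, is asymptotic to $4c_{n+2}$.
\end{itemize}
The same obstruction appears on every component of $C_{m+1}$ for $m>n$. Also, the construction inside $R$ is not a rescaled copy of the one in $C_0$, because $a_k,b_k$ vary; only the combinatorics is self-similar. Finally, knowing $f=f_{n+1}$ on $\partial R$ and $f(R)=f_{n+1}(R)$ says nothing about how $f$ distributes the mass of $C\cap R$.

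\textbf{How to repair it.} Work with full rectangles. On each component $R'$ of $C_{m+1}$ one has $(f_m)_\#(\1_{R'}\L^2)=\frac{c_{m+1}d_{m+1}}{s_{m+1}}\1_{f(R')}\L^1$. The ranges of the $16^{m-n}$ components $R'\subset R$ are the $8^{m-n}$ equal subintervals of $f(R)$, each taken exactly $2^{m-n}$ times. So $(f_m)_\#(\1_{C_{m+1}\cap R}\L^2)$ is uniform on $f(R)$, with mass $16^{m-n}c_{m+1}d_{m+1}\to\L^2(C\cap R)$. You can then pass to the limit, since $\1_{C_{m+1}\cap R}\L^2\to\1_{C\cap R}\L^2$ in total variation and $f_m\to f$ uniformly.

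Alternatively, as the paper does, skip $f_m$ entirely and run the same count for $f$ itself. Three facts are needed: $f(R')=f_m(R')$, all components of one generation carry the same $\mu$-mass, and $f_\#\mu\ll\L^1$. Together they show that each of the $8^k$ equal closed subintervals of $f(R)$ receives $f_\#(\1_R\mu)$-mass $2^k16^{-k}\mu(R)$, which forces uniformity. The paper packages exactly this via the stripes $A\cap f^{-1}(I_n)$ and Lebesgue points of $y\mapsto\mu_y(A)$.

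The displayed identity for $(f_n)_\#(\1_C\L^2)$ in the proof of Theorem~\ref{thm-prop-of-f}(iii), which you are imitating, has the same issue. It is exact only with $C_{n+1}$ in place of $C$ and $16^{n+1}c_{n+1}d_{n+1}/d_0$ in place of $\delta^2/d_0$.
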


\begin{proof}
    \begin{figure}
        \centering
        \includegraphics[width=0.9\linewidth]{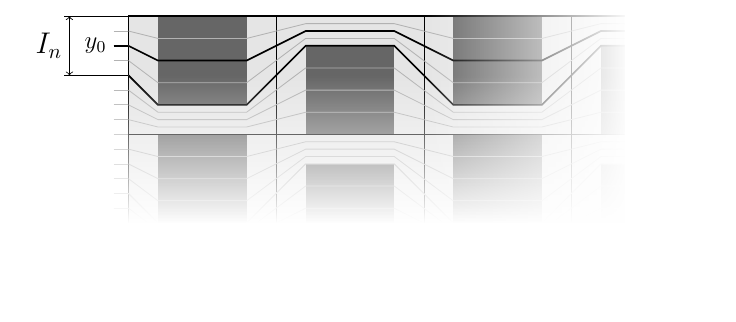}
        \caption{Level set as the intersection of nested curvilinear stripes.}
        \label{pic3}
    \end{figure}
    Let us show that for a.e. $y$ the measure $\mu_y$ is atomless.
    
    For any Borel set $A \subset C_0$ the map $y \mapsto \mu_y(A)$ belongs $L^1(\R)$ (see Section~\ref{subsect-disintegration}), therefore, a.e. $y\in [0,d_0]$ are Lebesgue points of this map.
    Let $\mathscr{R}_m$ denote the set of all connected components of $C_m$ and let $\mathscr{R} := \bigcup_{m\in \N} \mathscr{R}_m$.
    Since~$\mathscr{R}$ is countable, there exists a negligible set $N \subset [0, d_0]$ such that any $y_0 \in [0,d_0] \setminus N$ is a Lebesgue point of $y \mapsto \mu_y(A)$ \emph{for all} $A \in \mathscr{R}$.
    
    Fix $m \in \N$ and $A \in \mathscr{R}_{m}$.
    For every $n \in \N$ let $\F_n$ be the partition of $f(A)$ into $8^n$ closed subintervals with same length.
    For any $y_0 \in [0, d_0] \setminus N$ by the definition of Lebesgue point we have
    \begin{equation}\label{property-of-mu-y}
        \mu_{y_0}(A) = \lim_{n \to \infty} \frac{\int_{I_n} \mu_y(A) dy}{\L^1(I_n)} = \lim_{n \to \infty} \frac{\mu(A \cap f^{-1}(I_n))}{\L^1(I_n)},
    \end{equation}
    where for each $n\in \N$ the interval $I_n \in \F_n$ is such that $y_0 \in I_n$.
    
    Since $A$ can be written as a disjoint union of $16^n$ rectangles of equal measure $\mu$,
    and the curvilinear horizontal stripe $A \cap f^{-1}(I_n)$ contains $2^n$ such rectangles (see Figure~\ref{pic3}), we have
    \begin{equation}
    	\mu(A \cap f^{-1}(I_n)) = 2^{n} 16^{-n} \mu(A).
    \end{equation}
    By construction $I_n \in \F_n$, so $\L^1(I_n) = 8^{-n} \L^1(f(A))$. Thus by \eqref{property-of-mu-y}
    for all $A \in \mathscr{R}_m$ we have
    \begin{equation}
    	\mu_{y_0}(A) = \frac{\mu(A)}{\L^1(f(A))} = \frac{16^{-m} \mu(C_0)}{8^{-m} d_0}.
    \end{equation}
    
    By construction the measure $\mu$ is concentrated on $C$ and therefore for a.e. $y$ the measure $\mu_y$ in concentrated on $C$.
    Let $x\in C$ be such that $y_0 := f(x) \in [0, d_0] \setminus N$.
    Then for any $m\in \N$ there exists $A_m \in \mathscr{R}_m$ such that $x\in A_m$ and therefore
    \begin{equation}
    	\mu_{y_0}(\{x\}) \le \mu_{y_0}(A_m) = O(2^{-m}), \quad m\to \infty.
    \end{equation}
    Hence $\mu_{y_0}(\{x\}) = 0$.
    Therefore $\mu_{y_0}$ has no atoms.
\end{proof}

\section{Acknowledgments}
The authors would like to thank three anonymous referees for their very fine comments.

\bibliographystyle{alpha}
\bibliography{references}

\end{document}

%% file: include/preambule.tex
\usepackage[colorlinks,citecolor=green!50!black,linkcolor=blue]{hyperref}
\usepackage[utf8]{inputenc}
\usepackage[T2A]{fontenc}
\usepackage[russian,english]{babel}
\usepackage[margin=2.54cm,includeheadfoot]{geometry}
\usepackage{amsfonts,amsmath} 
\usepackage{dsfont}
\usepackage{amssymb}
\usepackage{mathrsfs}
\usepackage{graphicx}
\usepackage{tikz}
\usetikzlibrary{intersections,calc}
\usepackage{subcaption}
\usepackage{enumitem}

\usepackage{tikz}
\usetikzlibrary{fadings}

\date{\today}

\let\d\relax
\newcommand{\d}{\partial}

\newcommand{\vi}{\boldsymbol{v}}

\newcommand{\R}{\mathbb{R}}

\newcommand{\N}{\mathbb{N}}
\newcommand{\F}{\mathscr{F}}
\newcommand{\fhi}{\varphi}
\newcommand{\1}{\mathds{1}}
\newcommand{\nablap}{\nabla^\perp}
\renewcommand{\L}{\mathscr{L}}
\renewcommand{\H}{\mathscr{H}}
\renewcommand{\div}{\operatorname{div}}
\renewcommand{\geq}{\geqslant}
\renewcommand{\leq}{\leqslant}

\DeclareMathOperator{\osc}{osc}
\DeclareMathOperator{\supp}{supp}
\newcommand{\loc}{\mathrm{loc}}

\newcommand{\qq}{\infty}

\newtheorem{thm}{Theorem}[section]
\newtheorem{lem}[thm]{Lemma}
\newtheorem{prop}[thm]{Proposition}
\newtheorem{defn}[thm]{Definition}

\definecolor{darkred}{rgb}{0.5,0,0}

\graphicspath{{images/}}

\usepackage{mathtools}
\mathtoolsset{showonlyrefs}

